\newtheorem{theorem}{Theorem}[section]
\newtheorem{proposition}[theorem]{Proposition}
\newtheorem{corollary}[theorem]{Corollary}
\newtheorem{definition}[theorem]{Definition}
\newtheorem{lema}[theorem]{Lemma}
\newtheorem{ex}[theorem]{Example}
\newtheorem{exs}[theorem]{Examples}
\newtheorem{obs}[theorem]{Remark}
\numberwithin{equation}{section}
\begin{document}

\begin{frontmatter}

\title{Semialgebraic decomposition \\of real binary forms of a given degree's space}

\

 \author[M]{M. Ansola}\corref{cor2}
 \ead{mansola@ucm.es}

 \author[A]{A. D\'{\i}az-Cano\corref{cor1}}
 \ead{adiazcan@ucm.es}

 \author[MA]{M. A. Zurro\corref{cor3}}
 \ead{mangeles.zurro@uam.es}

 \cortext[cor1]{Corresponding author}
 \address[M]{Universidad Complutense de Madrid}
 \address[A]{Universidad Complutense de Madrid. \\Facultad de Matem\'{a}ticas. IMI and Dpto. de \'{A}lgebra}
 \address[MA]{Universidad Aut\'{o}noma de Madrid}

\begin{abstract}
The Waring Problem over polynomial rings asks for how to decompose a homogeneous polynomial of degree $d$ as a finite sum of $d^{th}$ powers of linear forms.

First, we give a constructive method to obtain a real Waring decomposition of any given real binary form with length at most its degree. Secondly, we adapt the Sylvester's Algorithm to the real case in order to determine a Waring decomposition with minimal length and then we establish its real rank. We use bezoutian matrices to achieve a minimal decomposition.

We consider all real binary forms of a given degree and we decompose this space as a finite union of semialgebraic sets according to their real rank. We study geometrically how distinct Waring decompositions of a fixed form are related. Some explicit  examples are included.
\end{abstract}

\begin{keyword} Real binary forms \sep Semialgebraic sets \sep Real Waring rank  \MSC 14P10 \sep 15A72 \sep 15A69
\end{keyword}

\date{\today}

\end{frontmatter}

\tableofcontents

\section{Introduction}

In the $18^{th}$ century, E. Waring proposed as a conjecture (proved by Hilbert in 1909) that every positive integer is the sum of $n$ $k^{th}$ powers of positive integers, with $n$ depending on $k$. For example, four squares, nine cubic powers or nineteen fourth powers. This classical Waring Problem can be extended to polynomial decompositions in this way: any homogeneous polynomial $p$ of degree $d$ in $n$ variables over a field $K$ can be written as the sum of $r$ $d^{th}$ powers of linear forms. When we take $r$ minimal with this property, we call $r$ \textit{the Waring rank} of $p$ over $K$. This expression (not necessa\-rily unique) is known as a Waring decomposition of that polynomial, and it has many applications as much in Applied Mathematics as in Engineering (see \cite{BCM} and the references therein). Applications to Theoretical  Physics can be shown in \cite{BC}. Nowadays this problem is studied as the problem of decomposition of symmetric tensors. Among open problems we find the description in terms of the Waring rank of the space of tensors of given degree and dimension.
\smallskip

Some papers present the study of particular cases, like monomials (for instance, \cite{BCG}, \cite{CKOV} or \cite{O}), but most authors work usually with ``typical forms", i.e., forms whose Waring rank is stable under perturbations of their coefficients. In fact, a rank $r$ is typical for a given degree $d$ if there exists an Euclidean open set in the space of real degree $d$ forms such that any $p$ in such open set has rank $r$. G.$\,$Blekherman \cite{Ble} or P. Common and G. Ottaviani \cite{CO} have  analyzed the ``typical ranks" of general real binary forms. 
\smallskip

The relation between the number of real linear factors and the real Waring rank of binary forms has been also studied by several authors (see \cite{Tok} and the references therein). N. Tokcan in \cite{Tok} has studied the real Waring rank for binary forms from the point of view of their factorization. 
\smallskip

We study a particular  case, that is $K=\mathbb{R}$ and $n=2$. As A. Causa y R. Re affirm in \cite{CR}, the real case becomes more complicated than the complex case. Also \cite{BB} emphasize the importance of the real case for the applications. This real binary case has been recently investigated by different authors (for instance, \cite{BCG}, \cite{CO} or \cite{Re}). It is also known that the complex Waring rank is less than or equal to the real Waring rank (see \cite{BB}, 
where a detailed study of this fact is given).
\smallskip

In this paper we collect in Section 2 the principal definitions and notation we use hereinafter. We include Sylvester's and Borchardt-Jacobi's Theorems. In Section 3 we expound on theoretical concepts that justify our Algorithm, inspired by the Sylvester's one, for Real Waring decomposition (Algorithm \ref{alg_WD}), with little differences in odd or even cases for the rank. Using this Algorithm we can obtain different real Waring decompositions of length less than or equal to $d$ choosing $\frac{d-1}{2}$, if $d$ is odd, or $\frac{d}{2}+1$ if $d$ is even, different parameters that satisfy certain requirements. Several  examples of this Algorithm are shown at the end of the section. 
\smallskip

Section 4 is dedicated to study the Real Waring rank. We present our Real Rank Length's Decomposition Algorithm (see Algorithm \ref{alg_rank}), that guarantees a real Waring decomposition with minimal length and then we can use it to determine the Waring rank of a real binary form. We also exhibit a step-by-step example where differences among complex and real ranks can be observed. Thus, we show how this Algorithm improves the previous one as far as Waring decomposition's length.
\smallskip

In Section 5 we develop the goal of this paper, i. e., the semialgebraic decomposition of the real binary forms of a given degree's space. We denote $\mathcal{B}_d$ the space of real binary forms of degree $d$, similar to $\mathcal{S}^{\mathbb{K}}_{n}$ or  $\mathcal{S}_{n}$ , used for $\mathbb{K}$ fields in general.  We prove that the sets $\mathcal{W}^{(r)} \subset \mathcal{B}_d$  of real binary forms of real rank $r$ are semialgebraic sets (see Theorem \ref{th-semialg}). Our technique to demonstrate that those sets are all of them semialgebraic is based on Borchardt-Jacobi Theorem (see Theorem \ref{Bez}). The principal minors of bezoutian matrices $B_r(q,q')$ give us a system of conditions which determine the semialgebraic sets. The analogous decomposition in the complex case can be seen in \cite{CS}. Moreover, in order to calculate the dimension of  $\mathcal{W}^{(r)}$  we can use the usual techniques in Real Geometry. This replies, in the real case, to the Q1 question asked by Carlini in \cite{C} for complex binary forms. In fact, for typical rank $r$, the dimension of $\mathcal{W}^{(r)}$ is $d+1$. 
\newpage

As a by-product, in Section 6 we obtain the semialgebraic structure of the set of Waring decompositions of $x^{d-m}y^m$ for $1\leq m\leq d-1$; the monomials are non typical but very interesting forms (see \cite{CCG} for the complex case).
Finally, we include the semialgebraic decomposition for $\mathcal{B}_3$ and $\mathcal{B}_4$ in the Section 7. At the end of this section, when we confront with degrees greater than four, we observe that the description of $\mathcal{W}^{(r)}$ becomes very complicated because of the length and degrees of the polynomials which define it. Therefore we restrict the decomposition for degree 5 to one of the canonical forms that P. Common and G. Ottaviani have described in \cite{CO}. In EACA 2016 \cite{ADZ} we presented the semialgebraic decomposition for one of these canonical forms of degree 5. In \ref{canonical} we compute the semialgebraic decomposition of the second type of canonical form.
\smallskip

There are some questions that remain open. For instance, the dimension of $\mathcal{W}^{(r)}$ for non typical ranks, since for typical ranks the sets $\mathcal{W}^{(r)}$ are semialgebraic sets of maximal dimension.  We are working on this problem for these ranks in fixed  degree  $d$.   B. Reznik \cite{Re2} has also studied canonical forms for polynomials, although he works over $\mathbb{C}$. It is a work in progress the computation of canonical forms for typical Waring ranks.


\section{Preliminaries}

Let be  $\mathcal{B}_d$ the real  space of real binary forms of degree $d$ in the variables $x,y$.
Let be $p(x,y)$ a real binary form in $\mathcal{B}_d$,
\begin{equation}\label{defp}
p(x,y)=p_{\vec{c}}(x,y)= \sum_{i=0}^{d}                         
\left(
\begin{minipage}[c][9pt][b]{8pt}
$$  \begin{array}{c}
   \vspace{-4pt}
   \!\! \scriptstyle d\!\! \\
   \vspace{-4pt}
    \!\! \scriptstyle i  \!\!
  \end{array} $$
\end{minipage}
\right)
c_{i}\, x^{i}\, y^{d-i}, \quad \text{with } \vec{c}=(c_0,\ldots, c_d) \in \mathbb{R}^{d+1}\setminus{\vec{0}}
\end{equation}
A {\sl Waring Decomposition over $\mathbb{R}$ of length $r$ for $p$}  is any rewrite of the form $p$ as a linear combination  of $d$-th powers of linear forms $\ell_i =\alpha_i x+\beta_i y$, $i=1,\dots ,r$, say
\begin{equation}\label{desWaring}
  p(x,y)=\sum_{i=1}^{r}\lambda_i\ell_i^d \ , \quad \text{for some real numbers } \lambda_i \ .
\end{equation}
We also required that this expression is not redundant, that is, $\ell_1 ,\dots ,\ell_r $ are linear independent. The number $r$ is call {\sl the length of the Waring decomposition}. Moreover, if $r$ is the smallest possible length for $p$, we call such $r$ {\sl the real rank of $p$}. 

We associate to each real binary form $p$ a family of Hankel matrices:
\begin{equation}\label{eq:Hr}
H_s=
\left(
    \begin{array}{cccc} c_0&c_1&\cdots &c_s \\ c_1&c_2&\cdots&c_{s+1} \\
    \vdots&\vdots&\ddots&\vdots\\c_{d-s}&c_{d-s+1}&\cdots&c_d
    \end{array}
\right) \quad , \, s=0, \dots , d \ ,
\end{equation}
Their kernels, $Ker(H_s )$, play an essential role in the study of Waring's decomposition. When complex coefficients are considered in this problem, the Sylvester's algorithm rely on the study of these matrices. We include it for the convenience of the reader (see Theorem 2.1 in \citep{BCM} and the refe\-rences therein).

\begin{theorem}[Sylvester's Algorithm] \label{c-SylAlg} A binary form of degree $d$, $p$, can be written as a finite sum of $\ d^{th}$ powers of complex linear forms as (\ref{desWaring}), if and only if
\begin{enumerate}
\item There exists a vector $\vec{q}=(q_0,\cdots,q_r)$ such that $H_r {\vec{q}\;}^t = 0$
\item The form $q(x,y)=\sum_{i=0}^r q_i\,x^i\,y^{r-i}$ factors as a product of $\ r$ distinct complex linear forms, i.e.,
$$
q(x,y)= \prod_{j=1}^r (\beta_j\, x - \alpha_j\, y).
$$
\end{enumerate}
 In that case,
\begin{equation}\label{eq:ComplexSylv}
 p(x,y)=\sum_{i=1}^{r}\lambda_i(\alpha_i x+\beta_i y)^d 
\end{equation}
for  some complex numbers $\lambda_i $, $i=1,\dots ,r$.

\end{theorem}

Our approach to the real Waring decomposition is based in a technical tool to guarantee the existence of real roots for  some polynomial $q(x,y)$ whenever its coefficients are in a linear space $Ker(H_s )$ for some number $s$. We use  the Bezoutian matrix associated to $q(t,1)$ and its derivative $q'(t,1)$, and the Borchardt-Jacobi Theorem.

\begin{definition}\label{def:bezoutian} Let $u(t)=\!\sum_{i=0}^n u_it^i$ and  $v(t)=\!\sum_{i=0}^n v_it^i$ be two real polynomials in a variable $t$ of degree at most $n$. The {\sl Hankel's  Bezoutian } or, simply, {\sl Bezoutian} of $u$ and $v$ is the  matrix
$$
B_n(u,v)=Bez_H(u,v)=(b_{ij})_{1\leq\, i,j\leq\, n},
$$
where the $b_{ij}$ are given by the formula $\dfrac{u(t)v(s)-u(s)v(t)}{t-s}=\sum_{i,j=1}^nb_{ij}\;t^{i-1}s^{j-1}$. Observe that $B_n$ is a symmetric matrix.
\end{definition}

\begin{theorem}[Borchardt-Jacobi Theorem, \cite{Bor}]  \label{Bez} 
The number of distinct real roots of a real polynomial $q(t)$ of degree $r$ is equal to the signature of the matrix $B_r(q,q')$, where $q'$ stands for the  derivative  $\frac{dq}{dt}$.
\end{theorem}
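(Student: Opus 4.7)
The plan is to exhibit the Bezoutian $B_r(q,q')$ as a congruence of a diagonal matrix whose entries are $q'(\alpha_i)^2$ at the distinct complex roots $\alpha_1,\ldots,\alpha_r$ of $q$, and then count signs according to whether each root is real or belongs to a conjugate pair.

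First I would reduce to the case where $q$ has simple roots. If $g=\gcd(q,q')$ has positive degree, the factor $g$ drops the rank of the Bezoutian but does not change its signature nor the number of distinct real roots, so we may assume $q$ has $r$ pairwise distinct roots, of which $s$ are real and $2t$ occur in conjugate pairs, with $s+2t=r$. Introduce the Lagrange basis at the $\alpha_i$'s,
$$L_i(t)\;=\;\frac{q(t)}{(t-\alpha_i)\,q'(\alpha_i)},\qquad L_i(\alpha_j)=\delta_{ij},$$
and write $L_i(t)=\sum_{k=1}^{r}v_{ik}\,t^{k-1}$, $V=(v_{ik})$.

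Second, I would establish the key identity
$$\frac{q(t)q'(s)-q'(t)q(s)}{t-s}\;=\;\sum_{i=1}^{r} q'(\alpha_i)^2\,L_i(t)\,L_i(s).$$
Both sides are polynomials of bidegree at most $(r-1,r-1)$; setting $s=\alpha_j$ on each side gives $q'(\alpha_j)^2\,L_j(t)$ (using $q(\alpha_j)=0$ and $L_i(\alpha_j)=\delta_{ij}$), and symmetrically for $t=\alpha_i$. Two-dimensional Lagrange interpolation on the grid $\{\alpha_i\}\times\{\alpha_j\}$ then forces equality. Reading off coefficients in the monomial basis yields the factorization
$$B_r(q,q')\;=\;V^{\top} D\, V,\qquad D=\mathrm{diag}\bigl(q'(\alpha_1)^2,\ldots,q'(\alpha_r)^2\bigr),$$
with $V$ a Vandermonde-type, invertible matrix.

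Finally I would compute the signature of $V^{\top}D V$ viewed as a real symmetric form. For each real root $\alpha_i$ the column $v_i$ of $V$ is real and $q'(\alpha_i)^2>0$, contributing a positive sign. For each conjugate pair $(\alpha,\bar\alpha)$ the columns satisfy $v_{\bar\alpha}=\overline{v_\alpha}$ and span a real $2$-plane $\Pi$ with real basis $\mathrm{Re}\,v_\alpha,\;\mathrm{Im}\,v_\alpha$; a direct calculation shows the restriction of $V^{\top}D V$ to $\Pi$ has determinant $-|q'(\alpha)|^4<0$, hence signature $(1,1)$. Adding contributions gives total signature $s+t\cdot(1-1)=s$, exactly the number of distinct real roots. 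The main obstacle is this last step: verifying that after passage to a real basis of $\Pi$ the $2\times 2$ block has negative determinant regardless of the argument of $q'(\alpha)$, which is the single non-routine computation in the argument.
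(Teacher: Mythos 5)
The paper offers no proof of this statement: it is quoted as a classical result of Borchardt--Jacobi with a bare citation to \cite{Bor}, so there is nothing internal to compare your argument against; I can only assess it on its own terms. Your proof is the standard Hermite--quadratic--form argument and its core is sound: the identity $\frac{q(t)q'(s)-q(s)q'(t)}{t-s}=\sum_i q'(\alpha_i)^2L_i(t)L_i(s)$ holds for simple roots (both sides reduce to $q'(\alpha_j)^2L_j(t)$ at $s=\alpha_j$, and degree bounds force equality), it gives $B_r(q,q')=V^{\top}DV$ with $V$ invertible, and the sign count is right: each real root contributes $+1$ since $q'(\alpha_i)^2>0$, and each conjugate pair contributes a hyperbolic block, because writing $c=q'(\alpha)^2=c_1+ic_2$ the pair's contribution to the form is $2\mathrm{Re}\,(c\,\ell(x)^2)=2c_1(u^2-v^2)-4c_2uv$, whose $2\times2$ real matrix has determinant $-4|c|^2=-4|q'(\alpha)|^4<0$, hence signature $(1,1)$. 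Two points deserve more care than you give them. First, a minor one: the relevant $2$-planes are spanned by the real and imaginary parts of the columns of $V^{-1}$ (the basis dual to the functionals $\ell_i(x)=\sum_k v_{ik}x_k$), not of $V$, and adding up the signatures of the blocks is legitimate precisely because these subspaces are $B$-orthogonal, which follows from $D$ being diagonal; you should say this. Second, the genuine gap: you assert that dividing out $g=\gcd(q,q')$ changes neither the signature of the Bezoutian nor the number of distinct real roots. The statement about the signature is exactly the kind of claim that requires proof (the Bezoutian of $q/g$ with its derivative is a different matrix of a different size), and you give none. The cleanest repair avoids the reduction entirely: for $q(t)=\prod_j(t-\alpha_j)^{m_j}$ one has $q'(s)=\sum_j m_j\,q(s)/(s-\alpha_j)$, from which $\frac{q(t)q'(s)-q(s)q'(t)}{t-s}=\sum_j m_j\,\frac{q(t)}{t-\alpha_j}\cdot\frac{q(s)}{s-\alpha_j}$, the sum running over the \emph{distinct} roots. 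The polynomials $q(t)/(t-\alpha_j)$ are linearly independent and the weights $m_j$ are positive, so the same sign count applies verbatim, now yielding rank equal to the number of distinct complex roots and signature equal to the number of distinct real roots, with no squarefree reduction needed.
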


\begin{obs}
We will denote $M_B(i)$ the principal $i^{th}$ minor of the Bezoutian matrix $B_r(q,q')$. Hence \ref{Bez} says that $q$ has $r$ distinct real roots if an only if \,  $M_B(i)>0$, \ for \ $i=1, \dots, r$.
\end{obs}


\section{Real Waring decompositions}

Let fix a real binary form $p(x,y)=p_{\vec{c}}(x,y)= \sum_{i=0}^{d}  \binom{d}{i}    c_{i}\, x^{i}\, y^{d-i}$. In this section we present a procedure to compute a Waring's decomposition of $p$ of length at most $d$. This number is an upper bound for the real rank of $p$. This was proved in  \cite{CO}, Prop. 2.1, but not explicit constructions was given there. This bound seems a lot less polished than Theorem 1.1. in \cite{Re2}, where the length of the Waring decomposition for a binary form is bound by $\ \frac{d+1}{2}\ $ or $\ \frac{d}{2}+1\ $, depending on whether $\ d\ $ is odd or even. But it is important to notice that our statement refers to ``any polynomial" while Sylvester talks about ``a general binary form". Moreover our procedure gives a family of such decompositions. An algorithm (see Algorithm  \ref{alg_WD}) is given to compute a Waring decomposition of length $d$.

\subsection{Real Waring decompostions }

Next, we consider two independent sets of indeterminates over  $\mathbb{R}$, say  $X_0 ,\dots ,X_d$ and $S, S_1  ,\dots ,S_\nu$,  for \break $\nu=(d-1)/2$ if $d$ is odd, and $\nu=d/2-1$ if $d$ is even. We will explain the procedure according to the parity of d.

\subsubsection{Construction for  odd degrees} \label{d_impar}

Let it be $d=2\nu+1$. Take a non zero real binary form $p(x,y)$ as in \ref{defp}, and  $\vec{c} = (c_0 ,\dots, c_d )$ the point of $\mathbb{R}^{d+1}\backslash\{\vec{0}\}$ associated to $p(x,y)$. Now, we consider  the matrix

\begin{equation} \label{V_impar}
    V=
\left(\begin{array}{ccccccc}  X_{0}&1&1&\cdots&1&1&c_{d} \\ X_{1}&S_{1}&-S_{1}&\cdots&S_{\nu}&-S_{\nu}&c_{d-1} \\
\vdots&\vdots&\vdots&\cdots&\vdots&\vdots&\vdots \\
X_{d}&S_{1}^{d}&(-1)^{d}S_{1}^{d}&\cdots&S_{\nu}^{d} &(-1)^{d}S_{\nu}^{d}&c_{0}\end{array}\right) \, ,
\end{equation} 
and we compute its determinant
$$
\det (V)= h(X_0 ,\dots ,X_d )=\Delta_0 X_0 \!+\!\Delta_1 X_1 \!+\!\cdots +\! \Delta_d X_d \in (\mathbb{R} [S_1,\cdots, S_{\nu}])[X_0, \cdots, X_d ].
$$
Hence $\Delta_j$ are polynomials in $\mathbb{R} [S_1,\cdots, S_{\nu}]$. Let us assume that $\Delta_d$ is not the zero polynomial. Then, the real algebraic set $\{\Delta_d = 0 \}$ has an open and dense complementary $\Omega$ in $\mathbb{R}^\nu$. Moreover the rational function $R =-\Delta_{d-1}/\Delta_d \in \mathbb{R}(S_1,\ \cdots, \ S_{\nu}\,)$ is well defined in $\Omega$. Next we define the real algebraic sets in $\mathbb{R}^\nu$:
\newpage
$$
A= \cup_{i=1}^{\nu} \{S_i=0\} \ ,\quad D= \left(\bigcup_{i<j} \{S_i + S_j=0\}\right) \cup \left(\bigcup_{i<j} \{S_i - S_j=0\}\right) \ ,
$$
$$
B= \left(\bigcup_{i=1}^{\nu} \{\Delta_{d-1}+ S_i\Delta_d=0\}\right) \cup \left(\bigcup_{i=1}^{\nu} \{\Delta_{d-1}- S_i\Delta_d=0\}\right)
$$
\smallskip
Then $\mathcal{G} =\Omega\setminus (A\cup B\cup D)$ is an open  semialgebraic set in $\mathbb{R}^\nu$. Moreover $\mathcal{G}$ is non empty, and we can choose $\textbf{s}=(s_1, \cdots, s_{\nu}) \in \mathcal{G}$. Then the real polynomial:
$$
h^* (T)=h(1,T,T^2 ,\dots ,T^d )=\Delta_0 (\textbf{s}) +\Delta_1 (\textbf{s})T +\cdots +\Delta_d(\textbf{s}) T^d  \
$$
has $d=2\nu+1$ real roots: $\pm s_i \in \mathbb{R}\setminus{0}$ and also $R$, which are distinct  by choice.

For ${\bar{c}}^{\,\,t} =(c_d ,\dots , c_0)$ and  ${\vec{\lambda}}^{\,t} =(\lambda_1\, ,\dots ,\,\lambda_d)$, we  consider  the linear system:

\begin{equation}\label{eq:sistema1}
    M\vec{\lambda} =\bar{c} 
\end{equation}
where 
\begin{equation} \label{M_impar}
M=
\left(
\begin{array}{cccccc}  1&1&\cdots&1&1 &1\\ s_{1}&-s_{1}&\cdots&s_{\nu}&-s_{\nu} &R\\
\vdots&\vdots&\cdots&\vdots&\vdots &\vdots\\
s_{1}^{d}&(-1)^{d}s_{1}^{d}&\cdots&s_{\nu}^{d}&(-1)^{d}s_{\nu}^{d} &R^{d}\end{array}
\right).
\end{equation} 
and we  find the wanted Waring's decomposition solving the system \eqref{eq:sistema1}. We point out that $M$ is a   $(d+1) \times \,d$ matrix of  rank  $d$. Also, we have $h^{*}(1,R,R^2,\,\cdots,\,R^d)=0$, and the determinant  $\det (\,M|\,\bar{c}\,)$  equals zero.  Thus, the system \eqref{eq:sistema1} can be solved, and this gives us the solution to the Waring problem in this case. That is,
$$
p(x,y) = \sum_{j=1}^{d} \lambda_j \,L_{j}^{d}\, (x,y),
$$
with $L_j (x,y)=x+s_j\, y$, if $j$ is odd,  $L_j (x,y)=x-s_j \,y$, if $j$ is even, when $j<d$, and $L_d (x,y)=x+R y$.

\medskip
Let us assume that $\Delta_d$ is  the zero polynomial. In this case we consider the following linear system for the fixed $\textbf{s}=(s_1, \cdots, s_{\nu}) \in \mathcal{G}$,

\begin{equation}\label{eq:d-impar-especial}
M\vec{\lambda} =\bar{c} \ , \ \text{ with } \ 
  M=
\left(
\begin{array}{cccccc}  1&1&\cdots&1&1 &0\\ s_{1}&-s_{1}&\cdots&s_{\nu}&-s_{\nu} &0\\
\vdots&\vdots&\cdots&\vdots&\vdots &\vdots\\
s_{1}^{d}&(-1)^{d}s_{1}^{d}&\cdots&s_{\nu}^{d}&(-1)^{d}s_{\nu}^{d} &1\end{array}
\right) , \ \text{and  } \  {\bar{c}}^{\,\,t} =(c_d ,\dots , c_0) \ ,
\end{equation}
to obtain a Waring's decomposition for $p$.

As a consequence of  the given procedure, we rewrite $p$ as a  Waring's decomposition of the form \eqref{desWaring}  for each odd degree $d$.

\begin{obs}
  Observe that for $p=3x^2 y + y^3$, we have $\Delta_3=-2 s_1  (0\cdot s_1^2 - 0) =0$, for any choice of $s_1$. The system \eqref{eq:d-impar-especial} gives
  $$
  \lambda_1 = \frac{1}{2 s_1} ,  \lambda_2 = -\frac{1}{2 s_1} , \lambda_3 = -s_1^2 + 1 .
  $$
 for every $s_1 \not =0$. Then 
 $$
 3x^2 y + y^3 = \frac{1}{2 s_1 } \left( x+s_1 y \right)^3 -
 \frac{1}{2 s_1} \left( x-s_1 y \right)^3 +( -s_1^2 + 1) y^3 \ ,
 $$
and, for $s_1=1$, we obtain a shorter expression since $p$ is a real binary form of real rank $2$, as we will see in the  subsection \ref{subsec:rank}.
\end{obs}

Next, we give an example of the previous procedure.

\begin{ex}  Take $p(x,y)= 212y^5+330xy^4+200x^2y^3+60x^3y^2+10x^4y+x^5$. \label{exp-p5} Choosing $s_1=3$ and $s_2=4$ in the algorithm, we obtain $R=0$ ; secondly $s_1=1$ and $s_2=2$, then $R=3$. Hence, we have
$$
\begin{array}{rl}
p(x,y)&=\displaystyle \frac{11}{21}(x+3y)^5-\frac{1}{21}(x-3y)^5+\frac{5}{56}(x+4y)^5+\frac{1}{56}(x-4y)^5 +\frac{5}{12} x^5=\\
\, \\
  &=(x+y)^5-(x+2y)^5+(x+3y)^5.
\end{array}
$$

\end{ex}

\subsubsection{Construction for  even degrees} \label{d_par}

In this case, $d=2\nu$. Take a non zero real binary form $p(x,y)$ as in \ref{defp}, and  $\vec{c} = (c_0 ,\dots, c_d )$ the point of $\mathbb{R}^{d+1}\backslash\{\vec{0}\}$ associated to $p(x,y)$. In this case we consider  the matrix:

\begin{equation}\label{eq:V_par}
V=
\left(\begin{array}{cccccccc}  X_{0}&1&1&1&\cdots&1&1&c_{d} \\ X_{1}&S&S_{1}&-S_{1}&\cdots&S_{\nu-1}&-S_{\nu-1}&c_{d-1} \\
\vdots&\vdots&\vdots&\vdots&\cdots&\vdots&\vdots&\vdots \\
X_{d}&S^d &S_{1}^{d}&(-1)^{d}S_{1}^{d}&\cdots&S_{\nu-1}^{d} &(-1)^{d}S_{\nu-1}^{d}&c_{0}\end{array}\right)
\end{equation} 
and we compute its determinant
$$
\det (V)= h(X_0 ,\dots ,X_d )=\Delta_0 X_0 \!+\!\Delta_1 X_1 \!+\!\cdots +\! \Delta_d X_d \in (\mathbb{R} [S, S_1,\cdots, S_{\nu-1}])[X_0, \cdots, X_d ].
$$
Now, let consider the polynomial $\Delta_{d}$. First suppose    $\Omega=\mathbb{R}^{\nu}\setminus \{ \Delta_d =0\ \}$ is non empty. Next we define the real algebraic sets in $\mathbb{R}^\nu$:
$$
A= \bigcup_{i=1}^{\nu-1} \{S_i=0\} \quad ,\quad 
B= \left\{
\, \Delta_{d-1}+2 S\Delta_d=0  \ \right\} 
\cup 
\left(\bigcup_{i=1}^{\nu-1} \{\Delta_{d-1}\pm S_i\Delta_d=0\}\right)   \ ,
$$

$$
D= \left(\bigcup_{i<j} \{S_i + S_j=0\}\right) \cup \left(\bigcup_{i<j} \{S_i - S_j=0\}\right) 
\cup \left( \bigcup_{i=1}^{\nu-1} \{S + S_i=0\} \right)  \cup \left(\bigcup_{i=1}^{\nu-1} \{S - S_i=0\} \right)
$$

Then $\mathcal{G} =\Omega\setminus (A\cup B\cup D)$ is an open  semialgebraic set in $\mathbb{R}^\nu$. Moreover $\mathcal{G}$ is non empty, and we can choose $\textbf{s}=(s,s_1, \cdots, s_{\nu-1}) \in \mathcal{G}$. Then the real polynomial:
$$
h^* (T)=h(1,T,T^2 ,\dots ,T^d )=\Delta_0 (\textbf{s}) +\Delta_1 (\textbf{s})T +\cdots +\Delta_d(\textbf{s}) T^d  \
$$
has $d=2\nu$ real roots: $\pm s_i \in \mathbb{R}\setminus{0}$, and also  $s$ and  $R=-\dfrac{\Delta_{d-1}}{\Delta_d}-s$, which are distinct  by choice.

The associated linear system to $p$ is now:

\begin{equation}\label{eq:sistema2}
    M\vec{\lambda} =\bar{c} 
\end{equation}
with $\vec{c} = (c_0 ,\dots, c_d )$ and
\begin{equation}\label{eq:M_par}
M=
\left(\begin{array}{cccccccc} 1&1&1&\cdots&1&1&1\\ s&s_{1}&-s_{1}&\cdots&s_{\nu-1}&-s_{\nu-1} &R\\
\vdots&\vdots&\vdots&\cdots&\vdots&\vdots&\vdots \\
s^d&s_{1}^{d}&(-1)^{d}s_{1}^{d}&\cdots&s_{\nu-1}^{d}& (-1)^{d}s_{\nu-1}^{d}&R^d\end{array}\right)
\end{equation} 
and we  find the wanted Waring's decomposition solving the system \eqref{eq:sistema2}. We point out that $M$ is a   $(d+1) \times \,d$ matrix of  rank  $d$. Also, we have $h^{*}(1,R,R^2,\,\cdots,\,R^d)=0$, and the determinant  $\det (\,M|\,\bar{c}\,)$  equals zero.  Thus, the system \eqref{eq:sistema2} can be solved, and this gives us the solution to the Waring problem in this case. Therefore,
\begin{equation}\label{casopar}
p(x,y) =  \sum_{j=1}^{d} \lambda_j L_{j}^{d} (x,y) \ ,
\end{equation}
with $L_j (x,y)=x+s_j\, y$, if $j$ is even,  $L_j (x,y)=x-s_j\, y$, if $j$ is odd, for $1<j<d$,  $L_1 (x,y)=x+s y$  and  $L_d (x,y)=x+R y$. 

\medskip
Next, let suppose    $\ \Delta_d $ is the zero polynomial. We take the linear system \eqref{eq:sistema2} with the matrix
\begin{equation}\label{eq:MparSpecial}
M=
\left(\begin{array}{cccccccc} 
1&1&1&\cdots&1&1&0
\\ s&s_{1}&-s_{1}&\cdots&s_{\nu-1}&-s_{\nu-1} &0
\\
\vdots&\vdots&\vdots&\cdots&\vdots&\vdots&\vdots 
\\
s^d&s_{1}^{d}&(-1)^{d}s_{1}^{d}&\cdots&s_{\nu-1}^{d}& (-1)^{d}s_{\nu-1}^{d}&1\end{array}\right)    
\end{equation}
Because $\Delta_d$ is the zero polynomial, the system \eqref{eq:sistema2} is solvable and we can obtain $\lambda_1 ,\dots \lambda_d$ such that 
\begin{equation}
   p(x,y) =  \sum_{j=1}^{d-1} \lambda_j L_{j}^{d} (x,y) +\lambda_d y^d\ , 
\end{equation}
with $L_j (x,y)=x+s_j\, y$, if $j$ is even,  $L_j (x,y)=x-s_j\, y$, if $j$ is odd, for $1<j<d$, and $L_1 (x,y)=x+s y$.

\begin{obs} Observe that for $p(x,y)=\dfrac{\varepsilon^2 +1}{\varepsilon} y^4+6 \, \varepsilon x^2 y^2   + 4x^3 y$, the system associate to the matrix \eqref{eq:M_par} gives $ 
  \lambda_1 =0 ,  \lambda_2 = \dfrac{\varepsilon^2 + \varepsilon + 1}{2(\varepsilon + 1)}, \lambda_3 =\dfrac{\varepsilon^2 - \varepsilon + 1}{2(\varepsilon - 1)} ,\lambda_4=-\dfrac{\varepsilon^3 }{\varepsilon^2 - 1}$ ,
 for  $s_1 =1$. Then 
 $$
p(x,y)= \lambda_2  \left( x+ y \right)^4 + \lambda_3 \left( x- y \right)^4 
+ \lambda_4 \left( x- \dfrac{1}{\varepsilon}y \right)^4   \ ,
 $$
and our procedure gives a Waring decomposition of length $3$. This allows  to analyze how  perturbations in the coefficients of the form $p$ are transmitted to  their  Waring's decompositions. 
 
\end{obs}

Next, we give an example of the previous procedure.

\begin{ex}  Take $p(x,y)= 240y^4+224xy^3+72x^2y^2+8x^3y+x^4$ \label{exp-p4}. In this case, we have firstly chosen $s_1=1$ in the algorithm and then $R=\frac{38}{9}$ ; secondly $s_1=2$ and then $R=4$. Hence, we have
$$
\begin{array}{rl}
p(x,y)&=\displaystyle \frac{34}{19}x^4-\frac{40}{29}(x+y)^4-\frac{8}{47}(x-y)^4 +\frac{19683}{25897}\left(x+\frac{38}{9}y\right)^4=\\
\, \\
  &=-(x+2y)^4+(x+4y)^4.
\end{array}
$$
\end{ex}

\subsubsection{The algorithm \texttt{Real Waring Decomposition}}

\medskip

The Waring decomposition constructed in the previous subsections gives a method to exhibit  solutions for the Waring problem for any real binary form of length at most $d$. The linear forms we gave have real coefficients. Observe that if we apply the Sylvester algorithm to $p$, in general there is not guarantee that the linear forms we obtain have real coefficients. This fact is quite delicate and it rely on the fact that $\mathbb{R}$ is a real closed field  in a essential way. Next we propose a method to obtain a Real Waring Decomposition.  In practice, a random choice for \ $ \textbf{s}$ \ it is probably a good input to preforms the proposed procedure.

\medskip

\begin{algorithm}[H]
\caption {Real Waring Decomposition (RWD)}
\label{alg_WD}

   \SetKwInOut{Input}{Input}
    \SetKwInOut{Output}{Output}

\Input  {$p_{\vec{c}\,}(x,y)= \displaystyle \sum_{i=0}^{d}
 \left(
\begin{minipage}[c][9pt][b]{8pt}
$$  \begin{array}{c}
   \vspace{-4pt}
   \!\! \scriptstyle d\!\! \\
   \vspace{-4pt}
    \!\! \scriptstyle i  \!\!
  \end{array} $$
\end{minipage}
\right)
c_{i}\, x^{i}\, y^{d-i}\neq(\alpha x +\beta y)^d$
}
\Output {a real Waring decomposition to $p_{\vec{c}\,}(x,y)$.}

\eIf{$d=2\ell +1$}
    {\vspace{-1mm}
    choose $s_1, \, \ldots,\, s_{\ell}$, real, non zero and distinct numbers \label{lin:s_i} \;
    \vspace{-1mm}
    construct the matrix $V$ as in (\ref{V_impar})\;
    \eIf{$\Delta_d=0$}
    {go to step \ref{lin:s_i}
    \vspace{-1mm}
    }
    { 
    \vspace{-1.2mm}
        determine $R=-\frac{\Delta_{d-1}}{\Delta_d}$\;
        \vspace{-1.2mm}
          \eIf{R is the same as any $s_i$ or their opposite}
          {go to step \ref{lin:s_i}
          \vspace{-1.2mm}
          }
          { 
          \vspace{-1.2mm}
          go to step \ref{lin:M}
          }
          \vspace{-1.2mm}
            construct the matrix $M$ as in (\ref{M_impar}) \label{lin:M}\;
            \vspace{-1.5mm}
            solve the linear system $ M\vec{\lambda} =\bar{c}$\;
            \vspace{-1.2mm}
    }
    \vspace{-1.5mm}
     The $\ $wanted $\ $decomposition is $\ p(x,y) = \sum_{j=1}^{d} \lambda_j \,L_{j}^{d}\, (x,y),\ $ with $L_j (x,y)=x+s_{\frac{j+1}{2}}\, y,$ if $j$ is even, $j<d$, $L_j (x,y)=x-s_{\frac{j}{2}} \,y$, if $j$ is odd, and $L_d (x,y)=x+R y$.
    }
    { 
    \vspace{-1mm}
    $d=2\ell$\;
    \vspace{-1.2mm}
    choose $s,\,s_1, \, \ldots,\, s_{\ell-1}$, real, non zero (except, maybe, s) and distinct numbers \label{lin:s_i_par}\;
    \vspace{-1mm}
    construct the matrix $V$ as in (\ref{eq:V_par})\;
    \vspace{-1.2mm}
    \eIf{$\Delta_d=0$ or $\Delta_{d-1}=0=s$}
    {\vspace{-1mm}
    go to step \ref{lin:s_i_par}
    }
    { 
    \vspace{-1.2mm}
        determine $R=-\frac{\Delta_{d-1}}{\Delta_d}-s$\;
        \vspace{-1mm}
          \eIf{R is the same as any $s_i$, their opposite, or $s$}
          {\vspace{-1mm}
          go to step \ref{lin:s_i_par}
          \vspace{-1mm}
          }
          { 
          \vspace{-1mm}
          go to step \ref{lin:M_par}
          }
          \vspace{-1.5mm}
            construct the matrix $M$ as in \eqref{eq:M_par} \label{lin:M_par}\;
            \vspace{-1.5mm}
            solve the linear system $ M\vec{\lambda} =\bar{c}$\;
            \vspace{-1mm}
    }
    \vspace{-1.5mm}
     The $\ $wanted decomposition is $p(x,y) = \lambda_1\, (x+sy)^d +\sum_{j=2}^{d} \,\lambda_j \,L_{j}^{d}\, (x,y)$,\\ with $L_j (x,y)=x+s_{\frac{j}{2}}\, y$, if $j$ is even, $j<d$, $L_j (x,y)=x-s_{\frac{j-1}{2}} \,y$, if $j$ is odd, and $L_d (x,y)=x+R y$.
    \vspace{-1mm}}
\label{lin:fin}
\end{algorithm}

\bigskip


\subsection{Real Waring rank  decompositions}\label{subsec:rank}

In this section we will show how to compute a real Waring decomposition of minimal length of a real binary form $p(x,y)$. The method we are presenting next is effective although of high computational complexity,  and it points out the importance of Bezoutian matrix analysis in the study of real Waring decompositions. Also we will show how to modify Algorithm 2.1. in \cite{BCM} to get the real rank of a real binary form $p(x,y)$.

\begin{theorem}\label{r-SylAlg}
Let be $p(x,y)$ a real binary form of degree $d$. Then, the following statements are equivalent:
\begin{enumerate}
    \item The form $p$ can be written as a finite sum of $\ d^{th}$ powers of {\bf real} linear forms as 
    \begin{equation}\label{eq:WaringReal1}
  p(x,y)=\sum_{i=1}^{r}\lambda_i \, (\alpha_i x+\beta_i y )^d \ , \quad \text{for some real numbers } \lambda_i , \ \alpha_i , \  \beta_i \ .
\end{equation}
\newpage
    \item There exists a vector $\vec{q}=(q_0,\cdots,q_r)$ such that $H_r {\vec{q}\;}^t = 0$, and the form $q(x,y)=\sum_{i=0}^r q_i\,x^i\,y^{r-i}$ factors as a product of $\ r$ distinct {\bf real} linear forms, in fact,  \begin{equation}\label{eq:WaringReal2}
    q(x,y)= \prod_{j=1}^r (\beta_j\, x - \alpha_j\, y) \ .
    \end{equation}
\end{enumerate}

\end{theorem}

For convenience of the reader we include an elementary proof of this fact in the Appendix (see \ref{Sylvester}). As a consequence of the previous theorem and theorem \ref{Bez}  we have the following corollary.

\begin{corollary}[Real Sylvester's Algorithm] \label{co:SylAlg}
A real binary form of degree $d$, $p$, can be written as a finite sum of $r$ $\ d^{th}$ powers of real linear forms as (\ref{desWaring}),  if
\begin{equation}
\begin{aligned}
 &\text{ There exists a vector } \vec{q}=(q_0,\cdots,q_r) 
   \text{ such that } H_r {\vec{q}\; }^t = 0 \ , \\ 
   &\text{and the Bezoutian matrix of the polynomial  } q(t)=\sum_{i=0}^r q_i\,t^{r-i}\  \text{  is positive defined.}    
\end{aligned}
\end{equation}

 Moreover, if $q(x,y)= \prod_{j=1}^r (\beta_j\, x - \alpha_j\, y).$, with $\alpha_j$ and $\beta_j$ reals, then the form $p$ can be rewrite as:

\begin{equation}\label{eq:RealSylv}
 p(x,y)=\sum_{i=1}^{r}\lambda_i\ell_i^d 
\end{equation}
for $\ell_i =\alpha_i x+\beta_i y$ and  some real numbers $\lambda_i $, $i=1,\dots ,r$.

\end{corollary}

\begin{ex} \label{ex_rank} Let be $p(x,y)= y^5+\frac{1}{2}x^2y^3-\frac{1}{2}x^4y$.
  Now, we are going to use the Algorithm \ref{alg_rank} to determine a Waring decomposition of length the rank of this polynomial.
\begin{enumerate}
  \item Compute the kernel of  $H_1$ and  the kernel of  $H_2$, where
  $$
  H_1=
  \left(
  \begin{array}{cc}
    1 & 0 \\
    0 & \frac{1}{20} \\
    \frac{1}{20} & 0 \\
    0 & \frac{-1}{10} \\
    \frac{-1}{10} & 0
  \end{array}
\right) 
\quad \text{ and } \quad
H_2=
  \left(
  \begin{array}{ccc}
    1 & 0& \frac{1}{20} \\
    0&\frac{1}{20} & 0 \\
    \frac{1}{20}& 0 & \frac{-1}{10} \\
    0&\frac{-1}{10} & 0
  \end{array}
\right)
$$
Since $ Ker(H_1)=\{\underline{0}\,\}$ and $Ker(H_2)=\{\underline{0}\,\}$, we must compute the kernel of  $H_3$:
$$
H_3=
   \left(
  \begin{array}{cccc}
    1 & 0 &\frac{1}{20}&0 \\
    0&\frac{1}{20} & 0& \frac{-1}{10} \\
    \frac{1}{20}& 0& \frac{-1}{10} &  0
  \end{array}
\right)
$$  

  \item Compute a basis of $ Ker(H_3)$, for instance $ (0,2,0,1)$. This vector can be associated with $q(t)=2 t^2+1$, with two distinct roots in $\mathbb{C}$, but not in $\mathbb{R}$.  Therefore, the real rank it can not be 3, but the complex rank is 3 and we can write:
  $$
  p(x,y) = \frac{41}{40}y^5-\frac{1}{80}\left( y+i\sqrt{2}x \right)^5- \frac{1}{80}\left( y-i\sqrt{2}x \right)^5
  $$
    \item Next compute the kernel of  $H_4$, where
  $$
  H_4=
   \left(
  \begin{array}{ccccc}
    1 & 0 &\frac{1}{20}&0& \frac{-1}{10} \\
    0&\frac{1}{20} & 0& \frac{-1}{10} &  0
  \end{array}
\right)
$$
Then $ Ker(H_4)$ is generated by the set
$ \left\{  \left(1,0,0,0,10\right), (0,1,0,1/2,0),\left(0,0,1,0,1/2 \right) \right\} $. 
For a generic vector in this kernel $(1,\lambda_1,\lambda_2,\lambda_1/2,10+\lambda_2)$,  its Bezoutian matrix is
\newpage
$$
\left(
\begin{array}{cccc}
  \frac{1}{4} \lambda_1^2-\lambda_2^2 -20\lambda_2 &  -\lambda_1(\lambda_2+30) & \frac{1}{2}\lambda_1^2 -2\lambda_2 -40  & \frac{1}{2}\lambda_1 \vspace{1mm}\\ 
   -\lambda_1(\lambda_2+30) &  -\lambda_1^2+2\lambda_2^2 -2\lambda_2-40 &-\frac{3}{2}\lambda_1+2\lambda_1\lambda_2 & 2\lambda_2 \vspace{1mm}\\ \frac{1}{2}\lambda_1^2 -2\lambda_2 -40  & -\frac{3}{2}\lambda_1+2\lambda_1\lambda_2 & 3\lambda_1^2-2\lambda_2 & 3\lambda_1 \vspace{1mm}\\ 
   \frac{1}{2}\lambda_1  & 2\lambda_2  & 3\lambda_1 &4
\end{array}
\right)
$$
so that $q$ will have 4 different real root if the $\lambda_i$ check simultaneously
$$
\begin{array}{l}
\frac{1}{4} \lambda_1^2-\lambda_2^2 -20\lambda_2>0 \\
-(1/4)\lambda_1^4+(1/2)\lambda_1^2\lambda_2^2-2\lambda_2^4-(81/2)\lambda_1^2\lambda_2-38\lambda_2^3-910\lambda_1^2+80\lambda_2^2+800\lambda_2>0 \\
-(1/2)\lambda_1^6-2\lambda_1^4\lambda_2^2-2\lambda_1^2\lambda_2^4-(359/2)\lambda_1^4\lambda_2-29\lambda_1^2\lambda_2^3+4\lambda_2^5-(43449/16)\lambda_1^4+\\
+(2709/4)\lambda_1^2\lambda_2^2+68\lambda_2^4+8765\lambda_1^2\lambda_2-472\lambda_2^3-3600\lambda_1^2-4320\lambda_2^2+9600\lambda_2+64000>0\\
-(1/2)\lambda_1^6-2\lambda_1^4\lambda_2^2-2\lambda_1^2\lambda_2^4-(357/2)\lambda_1^4\lambda_2-25\lambda_1^2\lambda_2^3+8\lambda_2^5-(43467/16)\lambda_1^4+\\
+1034\lambda_1^2\lambda_2^2+128\lambda_2^4+13800\lambda_1^2\lambda_2-1248\lambda_2^3-9600\lambda_1^2-10880\lambda_2^2+38400\lambda_2+256000>0\end{array}
$$

In particular, when $\lambda_1=0$, the five inequalities are verified for $-20<\lambda_2<1-\sqrt{41}$.

For example, if we take $\lambda_2=-52/9$, the vector $(1,0,-52/9,0,64/9)$ corresponds to $q(t)=(t-2)(t+2)(t-4/3)(t+4/3)$. Therefore, its real rank is $4$.

\item Solve the associate linear system $M\vec{\lambda}=\vec{c}$, where  the matrix $M$ is defined as:

$$
 M=
  \left(\begin{array}{cccc}  1&1&1&1\\ 
  2 &-2&\frac{4}{3}&-\frac{4}{3} \vspace{1mm}\\
4 &4&\frac{16}{9}&\frac{16}{9}  \vspace{1mm}\\
8 &-8&\frac{64}{27}&-\frac{64}{27}  \vspace{1mm}\\
16 &16&\frac{256}{81}&\frac{256}{81}  \vspace{1mm}\\
32 &-32&\frac{1024}{243}&-\frac{1024}{243}  \vspace{1mm}\\
\end{array}\right) .
$$

\item Then, a real rank Waring decomposition for $p$ is
$$
 p(x,y)=\frac{41}{1600}(x+2y)^5-\frac{41}{1600}(x-2y)^5-\frac{243}{3200} \left(x+ \frac{4}{3}y\right)^5 +\frac{243}{3200} \left(x- \frac{4}{3}y\right)^5.
$$
\end{enumerate}
\end{ex}

\begin{obs} We usually take the polynomial $q(x,y)$ of Corollary \ref{co:SylAlg} dehomogenized  with $t=\frac{y}{x}$ or $t=\frac{x}{y}$ and this makes easier the factorization. Nevertheless, there exists exceptional forms, as $p(x,y)=x^{2\nu}-y^{2\nu}$, which kernel's polynomial for $H_2$ loses its different real roots if we consider $q$ associated with the vector $(0,1,0)$ as $q(t)=t$ instead of $q(x,y)=xy$.
\end{obs}

\subsubsection{The algorithm \texttt{Real Waring Rank Decomposition}}\label{subsec:RRD}

Next we present the procedure to compute the Waring decomposition of minimum length. We call it the RRD decomposition. The key point will be to use Theorem \ref{Bez} to guaranty the existence of a polynomial $q(t)$ of degree $r$ associated to the kernel of the Hankel matrix $H_r$ such that $q$ has $r$ different real roots.

\medskip

\begin{algorithm}[H]
\caption {Real Rank Length's Decomposition (RRD)}
\label{alg_rank}

   \SetKwInOut{Input}{Input}
    \SetKwInOut{Output}{Output}

\Input  {$p_{\vec{c}\,}(x,y)= \displaystyle \sum_{i=0}^{d}
\left(
\begin{minipage}[c][9pt][b]{8pt}
$$  \begin{array}{c}
   \vspace{-4pt}
   \!\! \scriptstyle d\!\! \\
   \vspace{-4pt}
    \!\! \scriptstyle i  \!\!
  \end{array} $$
\end{minipage}
\right)
c_{i}\, x^{i}\, y^{d-i}$ or its asso\-ciated point $\underline{c}$.
}
\Output {a Waring decomposition to $p_{\vec{c}\,}(x,y)$ with minimal length.}

Initialize $r=1$\;
Define $H_r$ as (\ref{eq:Hr}) and determine its kernel:
$
H_r=<v_1,\,\ldots,\,v_{\delta_r}>
$\;
\eIf{$KerH_r=\{0\}$}
    {
    increment $r \leftarrow r+1$ and go to step 2.
    }
    { 
    define
    $\vec{q}=\vec{q}(\mu_1,\,\cdots,\mu_{\delta_r})= (q_0,\,\ldots \,q_r)= \sum_{i=1}^{{\delta_r}}\mu_i\,v_i$, a kernel's vector\;
    \eIf {$q_0\neq 0$}
    {
    consider $q(t)=\sum_{i=0}^{r}q_i\,t^{r-i}$ \label{lin:q}\;
    calculate $B_r(q,q')$ \label{lin:bez}\;
        \eIf {it is possible to find $(\mu_1^{\star},\,\cdots,\mu_{\delta_r}^{\star})\in \mathbb{R}^{\delta_r}$ such that $B_r(q,q')$ is positive definite \label{lin:bez_def_pos}
        }
        {
        factorize $q(t)=\prod_{i=1}^{r}(t-\alpha_i)$\;
        solve the linear system
        \vspace{-2mm}
        $$
        \left(\begin{array}{cccc} 1&1&\cdots&1\\
        \alpha_1 &\alpha_2&\cdots&\alpha_r\\
        \alpha_{1}^{2}&\alpha_2^{2}&\cdots&\alpha_{r}^{2}\\
        \vdots&\vdots&\cdots&\vdots \\
        \alpha_{1}^{d}&\alpha_2^{d}&\cdots&\alpha_{r}^{d}
        \end{array}\right)
        \left(\begin{array}{c}\lambda_1\\ \lambda_2 \\ \vdots \\ \lambda_r \end{array} \right) =
        \left(\begin{array}{c}c_d\\c_{d-1} \\ \vdots \\ c_0\end{array} \right)
        $$
        \vspace{-2mm}
        } 
        {
        increment $r \leftarrow r+1$ and go to step 2\;
        } 
    } 
    {
    take $q(t)=\sum_{i=0}^{r}q_i\,t^{i}$ and go to step \ref{lin:bez}\;
    } 
} 
    The wanted decomposition is
    \vspace{-2mm}
    $$
\begin{array}{cl}
  p(x,y)=\sum_{i=1}^{r}\lambda_i\left(x-\alpha_i\,y\right)^d & \textrm{if} \ q\  \textrm{was defined in the step \ref{lin:q}}, \\
  p(x,y)=\sum_{i=1}^{r}\lambda_i\left(y-\alpha_i\,x\right)^d & \textrm{in another case}.
\end{array}
$$
\vspace{-3mm}
\end{algorithm}

\subsubsection{The monomials}

\medskip

It is known that the real rank of a non trivial degree $d$  monomial in two variables (trivially, the monomials $x^d$ or $y^d$ have rank 1) is d (see \cite{BCG}). In the complex case (see \cite{CCG}), the rank is given by the expression  $rk(x_1^{a_1}x_2^{a_2})=a_2+1,\ $ for $ 1\leq a_1\leq a_2$.

We review first this fact from our approach by using Bezoutians. Let consider a monomial $x^m\,y^{d-m}$, with $m\geq 1$. We can assume $\ m\leq \frac{d}{2}$ in order to study its Waring decompositions. Following notations \ref{defp}, we rewrite $x^m\,y^{d-m} $ as
$$p_{\underline{c}}=\ p(x,y)=  \left (\! \!
\begin{array}{c}
  d \\
  m
\end{array}
\! \! \right)
c_m \, x^m \, y^{d-m} \quad \text{with } \underline{c}=(\underbrace{0,0,\ldots,0}_{m-1},c_m,0, \ldots,0) \ .
$$
Let $\ell>0$. The corresponding Hankel matrix for the rank of $d-\ell$ is
$$
H_{d-\ell}=
\left(
    \begin{array}{cccccccc} 0&\cdots& 0&c_m& 0&\cdots &0 \\
    0&\cdots &c_m& 0&0&\cdots &0 \\
    \vdots& \iddots & \vdots&\vdots&\vdots&\cdots&0\\
    c_m& \cdots & 0&0&0&\cdots&0
    \end{array}
\right)
$$
whose  kernel's vectors are $(0, \, \ldots, \, 0,q_{m+1}, \, \ldots \, ,q_{d-\ell},0, \, \ldots \, ,0)\,$, and we can write the corres\-ponding polynomials as $\quad \sum_{i=m+1}^{d-\ell}q_i t^{d-\ell-i}\,$. But, both polynomials does not have $d-\ell$ real different roots (see  \cite{BCG}, Lemma 4.1). Next we  compute some explicit examples.

\begin{exs}\label{ex:monomios} 

Monomial $x^{d-2}y^2$ for degrees $d=4$ and  $d=5$.

\begin{enumerate}
   
\item  {\rm The monomial $x^2y^2$}. The complex rank of this monomial is 3, and we can write:
$$
x^2y^2\!=\!\frac{1}{72}(x+2y)^4-\frac{1-i\sqrt{3}}{144}\left(x+(-1+i\sqrt{3})\,y\right)^4- \frac{1+i\sqrt{3}}{144}\left(x-(1+i\sqrt{3})\,y\right)^4.
$$
However, its real rank is 4 and a decomposition is
$$
x^2y^2=\frac{1}{4}(x+y)^4+\frac{7}{108}(x-y)^4-\frac{1}{54}(x+2y)^4 -\frac{8}{27}\left(x+\frac{1}{2}y\right)^4.
$$
Moreover, we can find polynomials as near as we want with minor rank. Take for $m>1$:
$$
  p_m(x,y)=\frac{1}{m}y^4+x^2y^2 =
  -\frac{1}{36}x^4+ \frac{m}{72}\left(x-\frac{\sqrt{6m}}{m}y\right)^4+  \frac{m}{72}\left(x+\frac{\sqrt{6m}}{m}y\right)^4 .
$$

\item {\rm The monomial $x^3y^2$}. Although the rank of a \emph{general} binary form of degree 5, according to Sylvester, is 3, Proposition 3.1 in \cite{CCG} says us that the complex rank for this monomial is 4. For example, we can write

$$
  x^3y^2=
  \frac{1}{40}(x-y)^5+\frac{1}{40}(x+y)^5-\frac{1}{40}(x-iy)^5 -\frac{1}{40}(x+iy)^5.
$$
However, its real rank is 5. Running the Algorithm \ref{alg_WD}, we can find the next family of decompositions for the monomial:

$$
\begin{aligned}
x^3y^2 = &\dfrac{b^2}{20a^2 (b^2-a^2)}\, (x+a y)^5+ \dfrac{b^2}{20a^2 (b^2-a^2)}\, (x-a y)^5+
\\
\  & +\dfrac{a^2}{20b^2 (a^2-b^2)}\, (x+b y)^5 + \dfrac{a^2}{20b^2 (a^2-b^2)}\,(x-b y)^5- 
   \dfrac{a^2+b^2}{10(a^2 b^2)} x^5
\end{aligned}
$$
depending on two parameters and well defined for $a, \ b$  non zero real parameters such that $a \neq \pm b$. However, we can find binary forms with smaller rank as near the monomial as we want. For example, in the coefficients space of the polynomials of $5^{th}$ degree, the  polynomial $x^3y^2+\frac{1}{m}\,xy^4$ belongs to any open ball centered at the monomial $x^3y^2$. But, in fact, its rank is $3$ ($Ker(H_2)=\{\underline{0}\}$)  and we have:
$$
  x^3y^2+\frac{1}{m}\,xy^4 = - \frac{m}{20}x^5
+\frac{m}{40}\left (x+\frac{\sqrt{2m}}{m}\,y \right)^5 +\frac{m}{40}\left (x-\frac{\sqrt{2m}}{m}\,y \right)^5 \ , \ \text{for } m>0.
$$
\end{enumerate}

\end{exs}


\section{Semialgebraic decomposition of the space of real binary forms of degree $d$}\label{section:semialgebraicos}

In this section we give a procedure to compute the real rank of any real binary form of degree $d$. This problem has a projective nature since $p$ and $\lambda p$ have the same real rank, for any $\lambda\in \mathbb{R}$. In the subsection \ref{subsec:RRD} we propose an algorithm to compute the real rank of any binary form; this algoritm gives an explicit description of the set 
$\mathcal{W}^{(r)}$ of real binary forms of real rank $r$. For complex binary forms the set of binary forms of complex  rank $r$ is a constructible set (see \cite{CS}). In the real case, these sets turns out to be semialgebraic sets. This property can be deduce also  from \cite{QCL}, but no explicit description was given there.  In the following we will give an explicit semi-algebraic description of such sets. Some examples of this decomposition are included in the last section of this paper for low degrees.

\medskip

\subsection{Semialgebraic decompositions}\label{descGlobal}

Let $V$ a real vector space of  dimension $d+1$. We write $\mathbb{P}(V)$ the projective space over the real vector space $V$. This real algebraic manifold has real dimension $d$ as a real algebraic variety. Moreover, if $V=\mathbb{R}^{d+1}$  we write $\mathbb{P}^d$ for shorten. Each point $\underline{c}$ in $\mathbb{P}(V)$ can be expressed in homogeneous coordinates $\underline{x}=[x_0:x_1:\cdots:x_d]$ once a basis in $V$ is fixed.

Now, we consider the real algebraic sets in $\mathbb{P}^d \times \mathbb{P}^d$
\begin{equation}\label{eq:acople}
 \mathcal{A}^{(r)}=\left \{(\underline{x},\, \underline{y}) \in \mathbb{P}^d \times \mathbb{P}^d \  : \ 
x_{\nu} y_0 + x_{\nu +1} y_1 +\cdots +    x_{\nu +r} y_r =\ 0 \ 
,\ 0\leq \nu \leq d-r \ \right\}.   
\end{equation}
for $r=1, 2, \dots ,d$. Let consider $\mathcal{H}_{r+1} =\mathbb{P}^d \times \left\{ \underline{y} \in \mathbb{P}^d \ : \  y_{r+1}=0\right\} $. It is easy to verify that 
\begin{equation}
  \mathcal{A}^{(r)} \cap \mathcal{H}_{r+1} \subset \mathcal{A}^{(r+1)} \, . 
\end{equation}

Next we proceed to describe the real algebraic sets where "real polynomials have all their roots real". By Borchardt-Jacobi Theorem, \cite{Bor}, these sets are described by the principal minors of Bezoutian matrices. For instance, for degree $3$, if we take the polynomial $q(t)=q_0 t^3+q_1 t^2 +q_2 t +q_3$, its associated Bezoutian matrix is
$$
B_3 =\begin{pmatrix} q_2^2 - 2 q_1 q_3 & q_1 q_2 - 3 q_0 q_3  & q_0 q_2 \\ 
q_1 q_2 - 3 q_0 q_3 & 2 q_1^2 - 2 q_0 q_2 & 2 q_0 q_1
\\
q_0 q_2 & 2 q_0 q_1 & 3 q_0^2
\end{pmatrix}
$$
Its principal minors are
\begin{equation}\label{eq:MB3}
\begin{aligned}
M_B (1)= & q_2^2 - 2 q_1 q_3
\ , \ 
M_B (2)=q_1^2 q_2^2 - 2 q_0 q_2^3 - 9 q_0^2 q_3^2 - 2 (2 q_1^3 - 5 q_0 q_1 q_2) \  q_3
\ , 
\\
 \ & M_B (3)= q_0^2 q_1^2 q_2^2 - 4 q_0^3 q_2^3 - 27 q_0^4 q_3^2 - 2 (2 q_0^2 q_1^3 - 9 q_0^3 q_1 q_2) \ q_3 \ .
\end{aligned}
\end{equation}
Hence, $q$ has three distinct real roots if and only if  $M_B (1)>0 , M_B (2)>0 , M_B (31)>0$. In particular, we recover the well known conditions for monic cubic polynomials  with $q_1 =0$:
$$
q_2^2 >0 
\ , \ 
-2 q_2^3 - 9 q_3^2 >0 
\ , \ 
-4 q_2^3 - 27 q_3^2 >0 \ .
$$
In general, we consider the semialgebraic sets in $\mathbb{P}^d$ defined by the positivity of the Bezoutian matrix $B_r$ for each $r$ in $\{ 1, \dots , d\}$. Let define 

\begin{equation}\label{eq:positividad}
  \mathcal{S}^{(r)}=\left \{
  [q_0 : \dots : q_r :0 : \dots : 0] \in \mathbb{P}^d 
  \ : \ 
  \ M_{B_r } (i)>0,\ \textrm{for} \ i = 1, \dots ,r  \right\} \subset \mathbb{P}^d.  
\end{equation}

Finally, we must consider the condition given in \eqref{eq:acople} and also \eqref{eq:positividad}, and then we define the global semialgebraic sets in $ \mathbb{P}^d \times  \mathbb{P}^d $:

\begin{equation}\label{def-Fr}
 \mathcal{F}^{(r)}= \mathcal{A}^{(r)} \cap \left(\mathbb{P}^d \times \mathcal{S}^{(r)} \right) \subset \mathbb{P}^d \times \mathbb{P}^d   
\end{equation}
The set  $\mathcal{F}^{(r)}$ is an intersection of a real  algebraic set with a semialgebraic. Hence, it is a semialgebraic set. We will point out this fact in the  theorem \ref{prop:Fr-semi}. It is to be noted that these sets encode the binary real forms and their  Waring decompositions. 

\begin{proposition} \label{prop:Fr-semi} For each $r$ in $\{ 1, \dots , d\}$, the set 
 $\mathcal{F}^{(r)}$ is a semialgebraic set of $\mathbb{P}^{d}\times \mathbb{P}^{d} $. Moreover,   if we consider  the projection $\pi : \mathbb{P}^{d}\times \mathbb{P}^{d} \rightarrow \mathbb{P}^{d}$ given by $\pi (\underline{x},\underline{y}) =\underline{x}  $, then the set 
 \begin{equation}\label{conj- a lo mas r}
  \mathcal{E}^{(r)}=\pi \left(\mathcal{F}^{(r)}\right)    
 \end{equation}
is a semialgebraic set.
\end{proposition}

For each $r$ in $\{ 1, \dots , d\}$, the set $\mathcal{E}^{(r)}$ describe the set of real binary form of real Waring rank at most $r$.

Moreover, by the Theorem of the Complementary for global semialgebraic sets (see \cite{BCR} ), we obtain that the set $\mathcal{E}^{(r)} \setminus{\mathcal{E}^{(r-1)}}$ is semialgebraic.  In fact, we have the following result.

\begin{theorem}\label{th-semialg}
  
Let be $r$ in $\{ 1, \dots , d\}$. The subset of $\mathbb{P}^{d}$ given by 
\begin{equation}\label{conj-exacto r}
 \mathcal{W}^{(r)} =\mathcal{E}^{(r)} \setminus{\mathcal{E}^{(r-1)}}\quad , \  \text{ with } \mathcal{E}^{(0)}=\emptyset \ ,
\end{equation}
is a semialgebraic set; it describes the set of real binary forms of real Waring rank $r$. As a consequence, this set is a disjoint union of a finite number of connected semialgebraic   sets.

\newpage
Moreover, we have the semialgebraic decomposition of the real vector space of real binary forms of degree $d$:

\begin{equation}\label{eq:PB}
\mathbb{P}(\mathcal{B}_d ) =\bigcup_{r=1}^{d}  \mathcal{W}^{(r)} \ .    
\end{equation}

\end{theorem}
We include in this section a basic example to show the decomposition \eqref{eq:PB} for $\mathbb{P}(\mathcal{B}_3 )$  (see example \ref{ex:B3}). The computation of the decomposition \eqref{eq:PB} for $\mathbb{P}(\mathcal{B}_4 )$ is pretty complicated and it is included in  section \ref{sect:ex}.

\begin{proposition} Let be  $\mathcal{B}_d$ the real vector space of real binary forms of degree $d$ in the variables $x,y$. Let be $\mathbb{P}(\mathcal{B}_d )$ the projective space over the real vector space $\mathcal{B}_d$. The function {\sl real rank}:
$$
\rho_d : \mathbb{P}(\mathcal{B}_d ) \rightarrow \{ 1, \dots ,d \} 
\quad , \quad \rho_d ([c_0 : \dots :c_d ]) = rk_{\mathbb{R}}(p_{\vec{c}})
$$
is a semialgebraic function.
\end{proposition}

\begin{proof} Observe that the set
$$
\left\{
(\underline{c} , rk_{\mathbb{R}}(p_{\vec{c}}) )  \ : \ p_{\vec{c}} \in \mathcal{B}_d \ , \ p_{\vec{c}} \not= 0
\right\} =
\bigcup_{r=1}^{d}  \mathcal{W}^{(r)} \times \{r\}
$$
describes the graph of $\rho_d$. So, it is semialgebraic.
\end{proof}

\begin{ex}\label{ex:B3} Let be  $\mathcal{B}_3$ the real vector space of real binary forms of degree $3$ in the variables $x,y$. Let be $\mathbb{P}(\mathcal{B}_3 )$ the projective space over the real vector space $\mathcal{B}_3$. Let us decompose $\mathbb{P}(\mathcal{B}_3 )$ by means of the real rank function for real binary forms.

By direct computation, we obtain that $\mathcal{W}^{(1)}$ is the projective curve given by

$$
 \mathcal{W}^{(1)} =
\left\{
[1:\alpha:\alpha^2:\alpha^3]  
\ | \ \alpha \in  \mathbb{R} \right\} 
\cup 
\left\{  ^{\ }
[0 :0 :0:1] \   \right\}.
$$

\begin{figure}[ht]
\centering
\includegraphics[width=0.45 \textwidth]{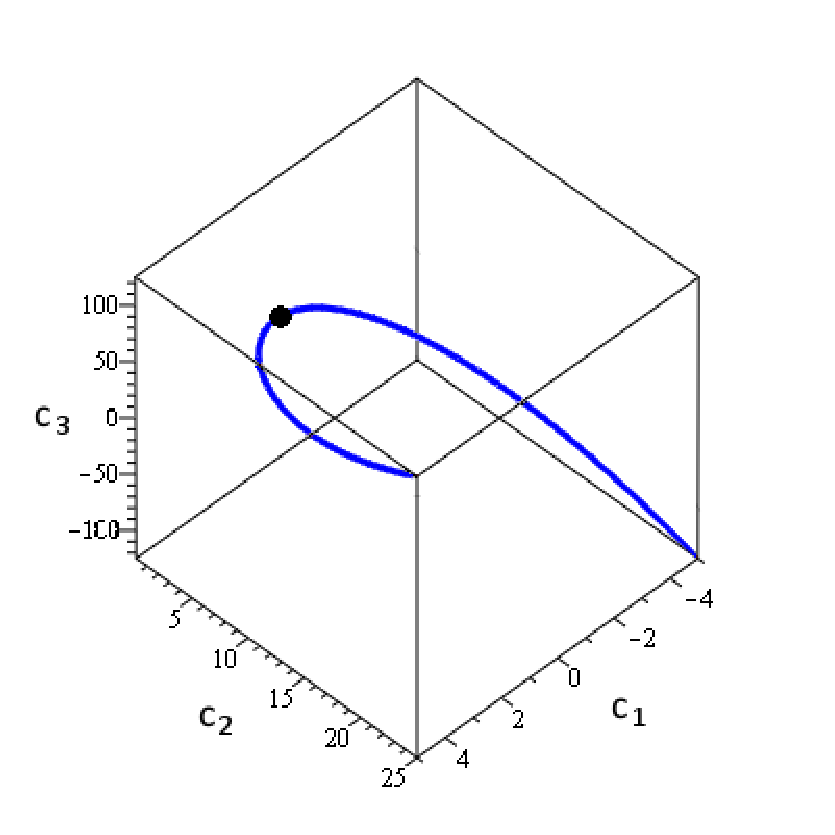}
\caption{Graphic of $\mathcal{W}^{(1)}$ when $c_0=1$. Market the monomial $y^3$.}
\label{fig:B3W1}
\end{figure}
(see figure \ref{fig:B3W1}). In order to determine $\mathcal{W}^{(2)} =\mathcal{E}^{(2)} \setminus\mathcal{W}^{(1)}$, we will analyze $\mathcal{E}^{(2)} = \pi \left(\mathcal{F}^{(2)}\right)    $, where $\pi : \mathbb{P}^{3}\times \mathbb{P}^{3} \rightarrow \mathbb{P}^{3}$ given by $\pi (\underline{x},\underline{y}) =\underline{x}  $ and 
$$
\mathcal{F}^{(2)} = \left \{(\underline{c},\, \underline{q}) \in \mathbb{P}^3 \times \mathbb{P}^3 \  : \ 
c_{0} q_0 + c_{1} q_1 +    c_{2} q_2 =\ 0 \ 
,c_{1} q_0 + c_{2} q_1 +   c_{3} q_2 =\ 0 \ , M_B (1)>0 , M_B (2)>0  \ \right\}.
$$
where $M_B (i)$ is the $i^{th}$ principal minor of the Bezoutian matrix defided in \ref{def:bezoutian} for the polinomial $q_0 t^2 +q_1  t +q_2$. First observe that $\mathcal{F}^{(2)} \cap \{ \ q_0 =0 \ \} =\emptyset $ and $\mathcal{F}^{(2)} \cap \{ \ q_2 =0 \ \} \subset \pi^{-1} ( \mathcal{W}^{(1)} ) $. Next we proceed to find the inequalities defining $\mathcal{W}^{(2)} $. For this, we consider
$$
Q_1 = c_0c_2-c_1^2 \ , \ 
Q_2 = c_1c_3-c_2^2\, , \quad \textrm{ and }   \ Q =Q_1^2 +Q_2^2 \ .
$$
Then, whenever $Q\not=0$ the linear system 
$$
c_{0} q_0 + c_{1} q_1 +    c_{2} q_2 =\ 0 \ 
,  \  c_{1} q_0 + c_{2} q_1 +   c_{3} q_2 =\ 0 \ ,
$$
can be solved for $\underline{c} \not \in    \mathcal{W}^{(1)} $, and we obtain:
\begin{equation}\label{LasQi}
q_0 = \frac{c_2^2 q_2 - c_1 c_3 q_2}{c_1^2 - c_0 c_2} \ , \ 
q_1 =-\frac{c_1 c_2 q_2 - c_0 c_3 q_2}{c_1^2 - c_0 c_2 } \ \textrm{ or } \ 
q_1 = -\frac{c_1 c_2 q_0 - c_0 c_3 q_0}{c_2^2 - c_1 c_3} \ , \ 
q_2 = \frac{c_1^2 q_0 - c_0 c_2 q_0}{c_2^2 - c_1 c_3} \ ,
\end{equation}
and then we replace these expressions in the Bezoutian principal minors'  inequatilies
$
M_B (1)>0 , M_B (2)>0  \ .
$
Next, computing with the equations \eqref{LasQi} in these inequalities, we obtain that 
$$
 \mathcal{W}^{(2)} =
\left\{ \  [c_0 : c_1 : c_2 : c_3] \in \mathbb{P}^3 \ : \ 
 \ 
Q_1  Q_2\not= 0 \ ,\   f >0 \ , \  f+2Q_1 Q_2 >0
\right\} 
$$
where $f$  is the homogeneous polynomial

\begin{equation}
 f(c_0,c_1,c_2,c_3)=
f(c_0,c_1,c_2,c_3)=c_0^2c_3^2 -6c_0c_1c_2c_3 +4c_0c_2^3 + 4c_1^3c_3-3c_1^2c_2^2  \ .
\end{equation}

Let decompose  $ \mathbb{P}^3 =U_0 \cup H_\infty$, with $U_0= \{\ c_0 \not=0 \ \}$ and $H_\infty = \{ c_0 = 0 \}$ . Then, we have $\mathcal{W}^{(2)} =X_0^{(2)}  \cup X_{0,\infty}^{(2)} $, where $ X_0^{(2)}= \mathcal{W}^{(2)} \cap U_0 $ and $ X_{0,\infty}^{(2)}=\mathcal{W}^{(2)} \cap  H_\infty $. The border $\partial X_0^{(2)} \subset \mathbb{R}^3 $ can be plotted using the program Surfer. We include its graphic for convenience of the reader. On the left graphic of Figure \ref{fig:doble}, $X_0^{(2)}$ is limited by the light surface. Observe that the origin of the right graphic in Figure \ref{fig:doble} corresponds to the monomial $y^3$. The remaining monomials must be looking for in $\partial X_{0,\infty}^{(2)}$.

Finally, we get
$\mathcal{W}^{(3)}$ as the complementary of $\mathcal{W}^{(1)}\cup \mathcal{W}^{(2)}$. So, $\mathcal{W}^{(3)}$ is also a real semialgebraic set.

\end{ex}

\begin{figure}[ht]
  \centering
    \includegraphics[width=0.9 \textwidth]{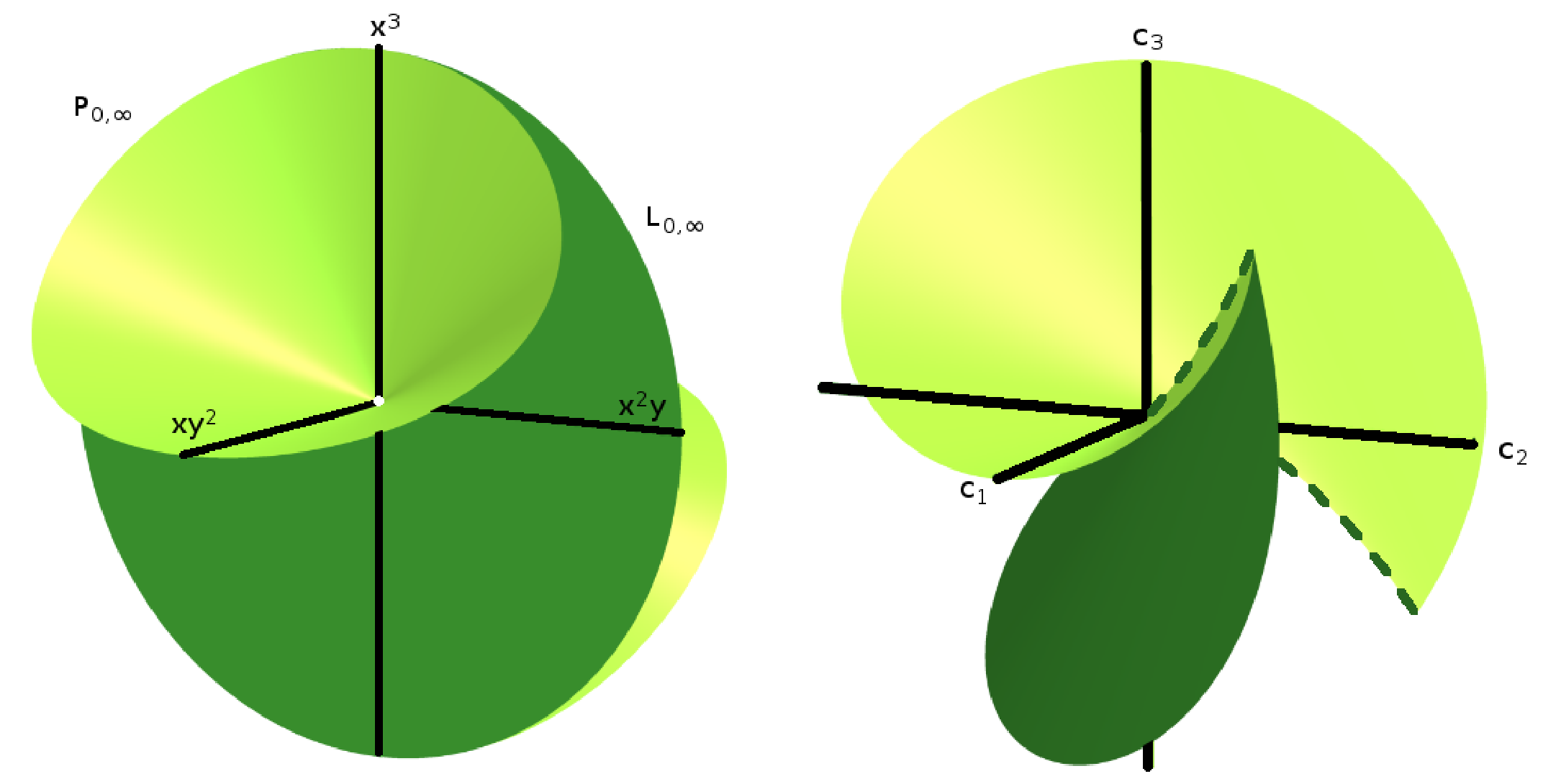}
  \caption{LHS, graphic of $\partial X_{0,\infty}^{(2)}$. RHS, $\partial X_0^{(2)}$. Marked $\mathcal{W}^{(1)}$ with dashed line.}
  \label{fig:doble}
\end{figure}

\medskip

\subsubsection{The typical ranks' strata}

 In \cite{Ble} and  \cite{CO}, for example, a {\it typical rank } $r$ is a rank such that $\mathcal{B}_d$ contains a non-empty open set  of real binary forms of rank $r$ (for  the usual topology of $\mathbb{R}^{d+1}$). We are going to work with $p$, called {\it typical real form} of real rank $r$   if there exits an open neighborhood of $p$ (in $\mathcal{B}_d$) of constant real rank $r$; observe that $r$ must be a typical, but there exist binary forms of rank $r$ that are not typical forms (see \ref{ex:monomios}, Example 1).

For complex forms, the set of binary forms of rank exactly $r$ has non-empty interior for $r=\lfloor{\frac{d}{2}} \rfloor +1$ (see  \cite{CO}), so there is only one \textit{generic rank}. However, in the real case, all ranks between $\lfloor{ \frac{d} {2}} \rfloor +1$ and $d$ are typical (see \cite{Ble} and \cite{CO}). Hence for each typical rank $r$, the semialgebraic set $\mathcal{W}^{(r)} $ is decomposed in strata $\Gamma_j$; some of them of maximal dimension, say for $j$ in a finite set $J_r$. So the real binary forms of real rank $r$ that are stable under perturbations in any directions are described in the semialgebraic set:
\begin{equation}
\mathcal{S}_r =\bigcup_{j\in J_r } \Gamma_j  \ \subset\mathcal{W}^{(r)}  \ .
\end{equation}
In general $\mathcal{S}_r $ is not a connected set, hence a local study must be consider for each $p$ in $\mathcal{S}_r$. In general $\mathcal{S}_r$ is strictly contained in $\mathcal{W}^{(r)}$, as can be deduced from \ref{ex:monomios}. It is a forthcoming work to find local descriptions for this semialgebraic sets $\mathcal{S}_r$, in view of the multiple uses that Waring's decomposition has in both Applied Mathematics and Engineering.

\medskip

\subsection{Real projective decompositions of a given form}

\medskip

Let be  $\mathcal{B}_d$ the real vector space of real binary forms of degree $d$ in the variables $x,y$. Let be $\mathbb{P}(\mathcal{B}_d )$ the projective space over the real vector space $\mathcal{B}_d$. For $p(x,y)$  in $\mathcal{B}_d$,

\begin{equation}
p(x,y)=p_{\vec{c}}(x,y)= \sum_{i=0}^{d}                         
\left(
\begin{minipage}[c][9pt][b]{8pt}
$$  
\begin{array}{c}
   \vspace{-4pt}
   \!\! \scriptstyle d\!\! \\
   \vspace{-4pt}
    \!\! \scriptstyle i  \!\!
 \end{array} 
 $$
\end{minipage}
\right)
c_{i}\, x^{i}\, y^{d-i}, \quad \text{with } \vec{c}=(c_0,\ldots, c_d) \in \mathbb{R}^{d+1}\setminus{\vec{0}} \ ,
\end{equation}
we will associate with $p_{\vec{c}}$ the projective point $\underline{c}=[c_0:c_1:\cdots:c_d]$ in $\mathbb{P}(\mathcal{B}_d )$. Next we will fix the point $\underline{c}$, and we will study the set of all  Waring's decompositions of $p_{\vec{c}}$. For this, we will analyze the fiber $\pi^{-1} (\underline{c}) $ where $\pi$  is the projection $\pi : \mathbb{P}^{d}\times \mathbb{P}^{d} \rightarrow \mathbb{P}^{d}$ given by $\pi (\underline{x},\underline{y}) =\underline{x}  $. First we present a concrete example of the fibers $\pi^{-1} (\underline{c})\cap  \mathcal{F}^{(s)}$ for $s=\textrm{rk}_\mathbb{R}(p), \dots , d$.Then we will show the general behavior of Waring decompositions of a fixed form $p$.

\begin{ex}
Let be $p(x,y)=y^3+3 x^2 y$. Its associated pojective point in $\mathbb{P}(\mathcal{B}_3 )$ is $\underline{c}=[1:0:1:0]$. Its real rank is $2$ and we have:
$$
y^3+3 x^2 y =\frac{1}{2} (x+y)^3-\frac{1}{2} (x-y)^3 \ .
$$
It is easy to verify that $\pi^{-1} (\underline{c})\cap  \mathcal{F}^{(2)}=\left\{ ([1:0:1:0],[1:0:-1:0]) \right\}$. By direct computations we have
\begin{equation}
 \pi^{-1} (\underline{c})\cap  \mathcal{F}^{(3)}=   
\left\{
 ([1:0:1:0],[q_0 :q_1 :-q_0 :q_3 ]) \in \mathbb{P}^3 \times \mathbb{P}^3 \ : \ M_1 >0 , M_2 >0 ,
  M_3 >0
 \right\}
\end{equation}
where the $M_i$ are the following homogeneous polynomials:
$$
M_1 = q_0^2 - 2 q_1 q_3  \ ,  \ M_2 = 2 q_0^4 + q_0^2 q_1^2 - 10 q_0^2 q_1 q_3 - 4 q_1^3 q_3 - 9 q_0^2 q_3^2  
$$
and 
$$ 
M_3 = (4 q_0^4 + q_0^2 q_1^2 - 18 q_0^2 q_1 q_3 - 4 q_1^3 q_3 - 27 q_0^2 q_3^2) \cdot q_0^2 \ .
$$
Observe that $\pi^{-1} (\underline{c})\cap  \mathcal{F}^{(2)}=  
\left( \pi^{-1} (\underline{c})\cap  \mathcal{F}^{(3)}\right)  \cap 
\left(
\{ \underline{c} \} \times \left\{  \ q_1 =0 \ , \ q_3 =0 \ 
\right\}
\right)$, and also $\left(\pi^{-1} (\underline{c})\cap  \mathcal{F}^{(3)}\right) \cap \{ q_0 =0 \} =\emptyset$. In the chart $U_0 = \{ q_0 \not= 0 \} \subset \mathbb{P}^3$ we can draw the semialgebraic region corresponding to the set $\left( \pi^{-1} (\underline{c})\cap  \mathcal{F}^{(3)}\right) $, see figure \ref{fig:fibra}. 

\begin{figure}[h]
  \centering
    \includegraphics[width=0.4 \textwidth]{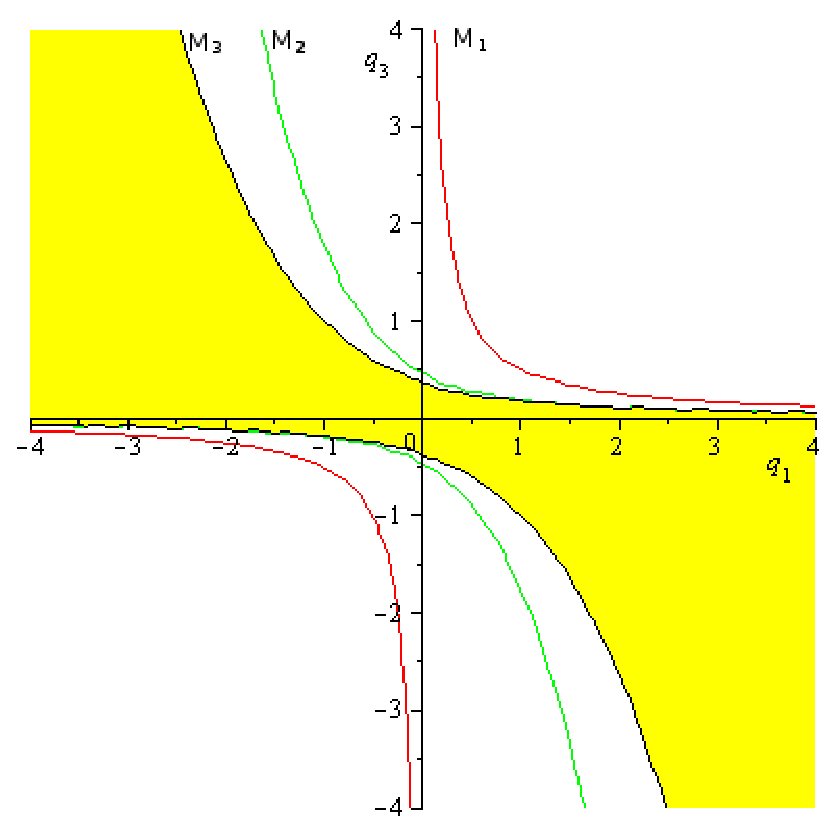}
  \caption{Shaded area corresponds with $\left( \pi^{-1} (\underline{c})\cap  \mathcal{F}^{(3)}\right) $}
  \label{fig:fibra}
\end{figure}
\medskip

We consider the semialgebraic stratification of $\pi^{-1} (\underline{c})\cap  \mathcal{F}^{(3)}$. The decomposition in strata of this set shows the discontinuity in the number of roots when moving on the one dimensional strata  towards the origin $(0,0)$ in $U_0$, where we obtain the (unique) Waring decomposition  of $p$ of length $2$.
\end{ex}

\centerline{--------------------}

\medskip

Next we present the general behavior of Waring decompositions of a fixed form $p$. Thereofer we will study  the fibers $\pi^{-1} (\underline{c})\cap  \mathcal{F}^{(s)}$ for $s=\textrm{rk}_\mathbb{R}(p), \dots , d$.

Let us consider the real projective space $\mathbb{P}^d$, and we fixed a system of projective coordinates on it; then we will write $\underline{q}=[q_0 :\dots :q_d ]$ for a point in $\mathbb{P}^d$. This space can be decompose as $\mathbb{P}^d = U_j \cup Z_{j,\infty}$, where $U_j$ is the chart of $\mathbb{P}^d$ given by $q_j\not= 0$ and $Z_{j,\infty}$ is the hyperplane $q_j = 0$. next we consider the linear subspaces given by the kernels of the Hankel matrices, $Ker(H_s)$,  associeted to $p$ as in \eqref{eq:Hr}. We embedded them in $\mathbb{P}^d$ as follow:
\begin{equation}
 \mathbb{P} (  Ker(H_s) ) \ni [q_0 :\dots :q_s ]  \rightarrow [q_0 :\dots :q_s: 0 : \dots : 0 ] \in \mathbb{P}^d \ ,
\end{equation}
for $s=\textrm{rk}_\mathbb{R}(p), \dots , d$. Obser that
\begin{equation}
     \mathbb{P} (  Ker(H_s) ) \  \subset \ \bigcap_{j>s} Z_{j,\infty} \ ,
\end{equation}
and also

\begin{equation}\label{nucleos}
     \mathbb{P} (  Ker(H_s) ) \cap U_s \  \subset \  \mathbb{P} (  Ker(H_{s+1}) )\cap Z_{s+1,\infty}
\end{equation}

From the previous formulas we have

\begin{proposition} The fibers  $\pi^{-1} (\underline{c})\cap  \mathcal{F}^{(s)}$ for $s=\textrm{rk}_\mathbb{R}(p), \dots , d$ are semianalitic sets of $\mathbb{P}^d$. Moreover, we have
\begin{equation}\label{fibras}
\left( \pi^{-1} (\underline{c})\cap  \mathcal{F}^{(s)}    \right)
\cap \left\{ \underline{c}\right\} \times U_s 
\subset 
\left( \pi^{-1} (\underline{c})\cap  \mathcal{F}^{(s+1)} \right)
\cap 
\left\{\underline{c} \right\}\times
Z_{s+1,\infty} 
\subset \pi^{-1} (\underline{c})\cap  \mathcal{F}^{(s+1)} \ .
\end{equation}

\end{proposition}

\begin{proof}
First, we observe that 
\begin{equation}
\mathbb{P} (  Ker(H_s) )=
   \left(\mathbb{P} (  Ker(H_s) ) \cap U_s \right)
   \ \cap \ 
   \left(\mathbb{P} (  Ker(H_s) ) \cap Z_{s , \infty} \right)
\end{equation}
Moreover, by \eqref{def-Fr}, we have that:
\begin{align}
    \pi^{-1} (\underline{c})\cap  \mathcal{F}^{(s)} = &(\pi^{-1} \left(\underline{c})\cap  \mathcal{A}^{(s)} \right) \cap (\underline{c} \times  \mathcal{S}^{(s)}) =\left(
    \left\{\underline{c} \right\}
    \times \mathbb{P} (  Ker(H_s) ) \right)
\cap \left(\left\{\underline{c} \right\} \times  \mathcal{S}^{(s)} \right) = \\
&\left\{\underline{c} \right\} \times  \left( \mathbb{P} (  Ker(H_s) ) \cap \mathcal{S}^{(s)} \right)
\end{align}
Hence, by \eqref{nucleos}, we have:
$$
\left( \pi^{-1} (\underline{c})\cap  \mathcal{F}^{(s)}    \right)
\cap\left( \{ \underline{c}\} \times U_s \right)
= \left\{\underline{c} \right\} \times  \left( \mathbb{P} (  Ker(H_s) ) \cap U_s \cap \mathcal{S}^{(s)} \right) 
\subset \  \left\{\underline{c} \right\} \times  \left( \mathbb{P} (  Ker(H_{s+1}) )\cap Z_{s+1,\infty} \cap \mathcal{S}^{(s)} \right) 
$$
Then, we obtain 
\begin{align*}
    \left( \pi^{-1} (\underline{c})\cap  \mathcal{F}^{(s)}    \right)
\cap\left( \{ \underline{c}\} \times U_s \right) 
\subset
&
\left\{\underline{c} \right\} \times  \left( \mathbb{P} (  Ker(H_{s+1}) )\cap Z_{s+1,\infty}  \right) 
\cap 
\left\{\underline{c} \right\} \times  \left( \mathbb{P} (  Ker(H_{s+1}) )\cap Z_{s+1,\infty} \cap \mathcal{S}^{(s+1)} \right) \subset \\
&\pi^{-1} (\underline{c})\cap  
\left(
\mathcal{A}^{(s+1)}  \cap \left\{ \underline{c} \right\} \times  \mathcal{S}^{(s+1)} 
\right)
\cap 
\left\{\underline{c} \right\}\times
Z_{s+1,\infty} \ .
\end{align*}
Therefore, we obtain the required formula.
\end{proof}

\medskip

\section{Explicit Semialgebraic decompositions}\label{sect:ex}

\subsection{Semialgebraic decomposition of  $\mathcal{B}_4$}

 Let be  $\mathcal{B}_4$ the real vector space of real binary forms of degree $4$ in the variables $x,y$. Let be $\mathbb{P}(\mathcal{B}_4 )$ the projective space over the real vector space $\mathcal{B}_4$. Let us decompose $\mathbb{P}(\mathcal{B}_4 )$ by means of the real rank function for real binary forms.To study the semialgebraic decomposition of  $\mathcal{B}_4$ we will use the following notation. Let be $r$ a positive integer,  $i_1 <i_2$ and $j_1 <j_2$  elements of  $\{ 0, \dots ,r \}$. We denote  by $Q_{i_1 i_2 , j_1 j_2}$ the $2\times 2$ minor of the Hankel matrix $H_r$ corresponding to the choice of rows $i_1$ and $i_2$  and columns $j_1$ and $j_2$, that is
 \begin{equation}\label{Qij}
  Q_{i_1 i_2 , j_1 j_2} =c_{i_1 +j_1} c_{i_2 +j_2}  -c_{i_1 +j_2}  c_{i_2 +j_1}  
 \end{equation}
Finally, we put $Q_r = \displaystyle\sum_{i_1 <i_2 , j_1 <j_2} \left(  Q_{i_1 i_2 , j_1 j_2}\right)^2$.
 
By direct computation, we obtain that $\mathcal{W}^{(1)}$ is the projective curve given by

$$
 \mathcal{W}^{(1)} =
\left\{
[1:\alpha:\alpha^2:\alpha^3 :\alpha^4]  
\ | \ \alpha \in  \mathbb{R} \right\} 
\cup 
\left\{  ^{\ }
[0 : 0 :0 :0:1] \   \right\}.
$$
Let be $\pi : \mathbb{P}^{4}\times \mathbb{P}^{4} \rightarrow \mathbb{P}^{4}$ given by $\pi (\underline{x},\underline{y}) =\underline{x}  $.
 In order to determine $\mathcal{W}^{(2)} =\mathcal{E}^{(2)} \setminus\mathcal{W}^{(1)}$, we will analyze $\mathcal{E}^{(2)} = \pi \left(\mathcal{F}^{(2)}\right)    $ with  
$$
\mathcal{F}^{(2)} = \left \{(\underline{c},\, \underline{q}) \in \mathbb{P}^4 \times \mathbb{P}^4 \  : \ 
c_{0} q_0 + c_{1} q_1 +    c_{2} q_2 =\ 0 \ 
,c_{1} q_0 + c_{2} q_1 +   c_{3} q_2 =\ 0 \ , M_B (1)>0 , M_B (2)>0  \ \right\}.
$$
where $M_B (i)$ is the $i^{th}$ principal minor of the Bezoutian matrix defided in \ref{def:bezoutian} for the polinomial $q_0 t^2 +q_1  t +q_2$. First observe that $\mathcal{F}^{(2)} \cap \{ \ q_0 =0 \ \} =\emptyset $ and $\mathcal{F}^{(2)} \cap \{ \ q_2 =0 \ \} \subset \pi^{-1} ( \mathcal{W}^{(1)} ) $. Next we proceed to find the inequalities defining $\mathcal{W}^{(2)} $. 
Then, whenever $Q_2 \not=0$ the linear system 
$$
c_{0} q_0 + c_{1} q_1 +    c_{2} q_2 =\ 0 \ 
,  \  c_{1} q_0 + c_{2} q_1 +   c_{3} q_2 =\ 0 \ , \  c_{2} q_0 + c_{3} q_1 +   c_{4} q_2 =\ 0 \ ,
$$
can be solved for $\underline{c} \not \in    \mathcal{W}^{(1)} $ and  we replace these solutions in the Bezoutian principal minors'  inequatilies
$
M_B (1)>0 , M_B (2)>0  \, .\ 
$
Next, these inequalities give the following semialgebraic description of $\mathcal{W}^{(2)} $:

\begin{equation}
\begin{array}{ll}
 \mathcal{W}^{(2)}=     &  \bigcup_{i_1 <i_2 , j_1 <j_2}
 \!\left\{
 [c_0:c_1:c_2:c_3:c_4] \in \mathbb{P}^4 \ 
  \left| \, \Delta =0 \ , \  Q_2 \not= 0 , f_{i_1 i_2 , j_1 j_2} >0\! \right.  , g_{i_1 i_2 , j_1 j_2} >0 \right\}  \cup \\
     &  \cup  \left\{ [1:\alpha:\alpha^2:\alpha^3:\beta] \in \mathbb{P}^4 \ | \ Q_2 \neq 0 \ 
  \right\}
\end{array}
\end{equation}
where $ \Delta = \det(H_2 )=c_0c_2c_4-c_0c_3^2+2c_1 c_2c_3-c_1^2c_4-c_2^3$,  $f_{i_1 i_2 , j_1 j_2}$ is obtained from $M_B (1)$ and $g_{i_1 i_2 , j_1 j_2}$ from $M_B (2)$. For instance, if $Q_{01 , 01}=c_0 c_2 -c_1^2 \neq 0$,
$$
f_{01 , 01}=
c_0 ^2 c_3 ^2-4 c_0  c_1  c_2  c_3 +2 c_0  c_2 ^3+2 c_1 ^3 c_3 -c_1 ^2 c_2 ^2\, , \quad g_{01 , 01} = f_{01 , 01}- 2Q_{01 , 01}Q_{01 , 12}
\ .
$$
To determine $\mathcal{W}^{(3)}$ we have to consider the following system of linear equations:
$$
c_{0} q_0 + c_{1} q_1 +    c_{2} q_2 +c_3 q_3 =\ 0 \, 
,  \quad  c_{1} q_0 + c_{2} q_1 +   c_{3} q_2 +c_4 q_3 =\ 0 \, , 
$$
\newpage
\noindent
that can be solved for $\underline{c} \not \in  \left( \mathcal{W}^{(1)} \cup \mathcal{W}^{(2)} \right)$ whenever $Q_3 \not=0$. Next, we must consider the Bezoutian matrix associated to $q(t)=q_0 t^3 +q_1 t^2 +q_2 t +q_3$ and its principal minors as in \eqref{eq:MB3}. In order to simplify the expressions, we take $q_3=\nu q_2$. With this trick, for instance, when $Q_{01 , 01}>0$, the condition for $p$ to be in $\mathcal{W}^{(3)}$ can be expresed by the inequalities:

$$
2\,Q_{01 , 03} \nu^2+2\,Q_{01 , 02} \nu+Q_{01 , 01}>0 \, , \ 
$$
$$
C_{14}(\underline{c})\nu^4+C_{13}(\underline{c})\nu^3+C_{12}(\underline{c})\nu^2+C_{11}(\underline{c})\nu+C_{10}(\underline{c})>0\ \textrm{ and }
$$
$$
C_{24}(\underline{c})\nu^4+C_{23}(\underline{c})\nu^3+C_{22}(\underline{c})\nu^2+C_{21}(\underline{c})\nu+C_{20}(\underline{c})>0,
$$
with
$$
\begin{array}{rl}
 C_{14}=& 9 c_ 1 ^2 c_ 2 ^2 c_ 3 ^2-4 c_ 1 ^3 c_ 3 ^3+4 c_ 0 ^3 c_ 4 ^3-12 c_ 0 ^2 c_ 1  c_ 3  c_ 4 ^2-9 c_ 0  c_ 1 ^2 c_ 2  c_ 4 ^2+12 c_ 0  c_ 1 ^2 c_ 3 ^2 c_ 4 +18 c_ 0  c_ 1  c_ 2 ^2 c_ 3  c_ 4 -9 c_ 0  c_ 2 ^3 c_ 3 ^2+9 c_ 1 ^4 c_ 4 ^2-\\
 & - 18 c_ 1 ^3 c_ 2  c_ 3  c_ 4  \\
   C_{13}=   & 18 c_ 1 ^2 c_ 2 ^3 c_ 3 -18 c_ 1 ^3 c_ 2 ^2 c_ 4 -20 c_ 1 ^3 c_ 2  c_ 3 ^2+10 c_ 0 ^2 c_ 2 ^2 c_ 3  c_ 4 +10 c_ 0  c_ 1 ^3 c_ 4 ^2+6 c_ 0  c_ 1 ^2 c_ 2  c_ 3  c_ 4 +12 c_ 0  c_ 1 ^2 c_ 3 ^3+18 c_ 0  c_ 1  c_ 2 ^3 c_ 4 +\\
   & +8 c_ 0  c_ 1  c_ 2 ^2 c_ 3 ^2-18 c_ 0  c_ 2 ^4 c_ 3 +8 c_ 1 ^4 c_ 3  c_ 4 -24 c_ 0 ^2 c_ 1  c_ 3 ^2 c_ 4 +12 c_ 0 ^3 c_ 3  c_ 4 ^2-22 c_ 0 ^2 c_ 1  c_ 2  c_ 4 ^2 \\
 C_{12}=     & -2 c_ 0  c_ 1  c_ 2 ^3 c_ 3 -9 c_ 0  c_ 2 ^5-10 c_ 1 ^4 c_ 2  c_ 4 +c_ 0 ^3 c_ 2  c_ 4 ^2+12 c_ 0 ^3 c_ 3 ^2 c_ 4 -c_ 0 ^2 c_ 1 ^2 c_ 4 ^2-46 c_ 0 ^2 c_ 1  c_ 2  c_ 3  c_ 4 -12 c_ 0 ^2 c_ 1  c_ 3 ^3+10 c_ 0 ^2 c_ 2 ^3 c_ 4 +\\
 &+ 10 c_ 0 ^2 c_ 2 ^2 c_ 3 ^2+22 c_ 0  c_ 1 ^3 c_ 3  c_ 4 +12 c_ 0  c_ 1 ^2 c_ 2 ^2 c_ 4 +16 c_ 0  c_ 1 ^2 c_ 2  c_ 3 ^2-10 c_ 1 ^3 c_ 2 ^2 c_ 3 +9 c_ 1 ^2 c_ 2 ^4-2 c_ 1 ^4 c_ 3 ^2 \\
 C_{11}=     & 10 c_ 0  c_ 1 ^2 c_ 2 ^2 c_ 3 -10 c_ 0  c_ 1  c_ 2 ^4-2 c_ 1 ^5 c_ 4 -10 c_ 1 ^4 c_ 2  c_ 3 +6 c_ 1 ^3 c_ 2 ^3+2 c_ 0 ^3 c_ 2  c_ 3  c_ 4 +4 c_ 0 ^3 c_ 3 ^3-2 c_ 0 ^2 c_ 1 ^2 c_ 3  c_ 4 -4 c_ 0 ^2 c_ 1  c_ 2 ^2 c_ 4 -\\
 &- 24 c_ 0 ^2 c_ 1  c_ 2  c_ 3 ^2+12 c_ 0 ^2 c_ 2 ^3 c_ 3 +6 c_ 0  c_ 1 ^3 c_ 2  c_ 4 +12 c_ 0  c_ 1 ^3 c_ 3 ^2 \\
   C_{10}=   &Q_{01 , 01} (c_3^2c_0^2+2c_0c_2^3-4c_0c_2c_3c_1-c_1^2c_2^2+2c_1^3c_3)\\
    C_{24}= &  C_{14}-18Q_{01 , 01}^2Q_{01 , 13} \\
    C_{23}= &  C_{13}-4Q_{01 , 01}Q_{01 , 13}2c_0c_4+7c_1c_3-9c_2^2\\
    C_{22}= & C_{12}-2Q_{01 , 01} (-4c_0c_2^2c_4-4c_0c_2c_3^2+8c_0c_1c_3c_4+9c_2^4-10c_1c_2^2c_3- 4c_1^2c_4c_2+5c_1^2c_3^2)\\
    C_{21}=  & C_{11}-2Q_{01 , 01}(-5c_0c_2^2c_3+c_0c_1c_4c_2+4c_0c_1c_3^2+4c_1c_2^3-3c_1^2c_2c_3-c_1^3 c_4)\\
    C_{20}=  & C_{10}-2Q_{01 , 01}^2Q_{01 , 12}\\
\end{array}
$$
The first inequality can be replaced by the condition:
$$
(2 c_0 ^2 c_2  c_4 -c_0 ^2 c_3 ^2-2 c_1 ^2 c_4  c_0 -c_1 ^2 c_2 ^2+2 c_1 ^3 c_3)Q_{01,03}>0
$$
and using algebraic techniques it is possible to eliminate the parameter $\nu$ in $M_B (2)$ and $M_B (3)$, but that work moves away from the size of this article.
Finally $
\mathcal{W}^{(4)}=\mathcal{B}_4 \backslash (\mathcal{W}^{(1)}\cup \mathcal{W}^{(2)}\cup \mathcal{W}^{(3)})
$; hence it is a real semialgebraic set too.

\subsection{Decomposition of canonical forms of degree 5.}\label{canonical}

P. Comon and  G. Ottaviani  in \cite{CO} have proposed two families of $5^{th}$ degree binary forms depending on two real parameters. In EACA 2016 \cite{ADZ} we presented the semialgebraic decomposition for Type I   canonical forms of degree 5.  Let us consider  Type II canonical forms, say 
\begin{equation}
    \Sigma = 
    \left\{
    p(x,y)= x(x^2 -y^2 )(x^2 +2axy +b y^2 ) \in \mathcal{B}_5 \ : \ (a,b)\in \mathbb{R}^2 
    \right\}
\end{equation}

Let be  $
\underline{c}=\left[0:\frac{-b}{5}:\frac{-a}{5}:\frac{b-1}{10}:\frac{2a}{5}:1 \right] \in \mathbb{P}^5
$ the corresponding projective point to the form $p(x,y)= x(x^2 -y^2 )(x^2 +2axy +b y^2 ) $. Observe that in the chart $U_5 = \{ c_5 \not= 0 \}$, the set $\Sigma$ is the affine plane passing by $P=(0,0,0,-\frac{1}{10},0)$ with associated  vector space generated by $v_1 =(0,0, -\frac{1}{5},0,\frac{2}{5})$ and $v_2 =(0,-\frac{1}{5},0,\frac{1}{10},0)$.

In the section \ref{descGlobal} we have shown that $ \mathbb{P}^5 = \bigcup_{r=1}^{5} \mathcal{W}^{(r)}$. Hence $\Sigma = \bigcup_{r=1}^{5} \Sigma^{(r)}$, with $ \Sigma^{(r)} = \Sigma \cap  \mathcal{W}^{(r)}$. By direct computations we obtain that $ \Sigma^{(1)} = \emptyset$ and also $\Sigma^{(2)} = \emptyset $. To study the remaining cases we define the parametrization $$\gamma (a,b) =\left(-\frac{b}{5},-\frac{a}{5},\frac{b-1}{10},\frac{2a}{5}\right) \ .$$ Therefore $\gamma( \mathbb{R}^2 )$ can be identified with $\Sigma$.

Next we describe $\Sigma^{(3)}$. By direct computations we obtain  
$\gamma^{-1}(\Sigma^{(3)}) = \left\{(a,b)\,|\, f (a,b) >0 \right\}$ where $f$ is the polynomial

$$
\begin{array}{lr}
f(a,b)=& 8192 a^{12}-19712 a^{10} b^2+7680 a^8 b^4+6560 a^6 b^6+480 a^4 b^8-\\
&-77 a^2 b^{10}+2 b^{12}-115712 a^{10} b+336640 a^8 b^3-287040 a^6 b^5+\\
&+44400 a^4 b^7-4680 a^2 b^9+142 b^{11}+78848 a^{10}+99840 a^8 b^2-\\
&-700160 a^6 b^4+700160 a^4 b^6-92940 a^2 b^8+3752 b^{10}-\\
&-287488 a^8 b+375552 a^6 b^3+311952 a^4 b^5-593208 a^2 b^7+\\
&+43192 b^9-4096 a^8+392736 a^6 b^2-673952 a^4 b^4+243410 a^2 b^6+\\
&+170652 b^8+12096 a^6 b-243056 a^4 b^3+348552 a^2 b^5-\\
&-170652 b^7+64 a^6-11840 a^4 b^2+62900 a^2 b^4-43192 b^6-\\
&-144 a^4 b+3960 a^2 b^3-3752 b^5+83 a^2 b^2-142 b^4-2 b^3,
\end{array}
$$
(see figure \ref{fig:rango3caso2}). 

\smallskip

\begin{figure}[ht]\label{fig:W3}
  \centering
    \includegraphics[width=0.6 \textwidth]{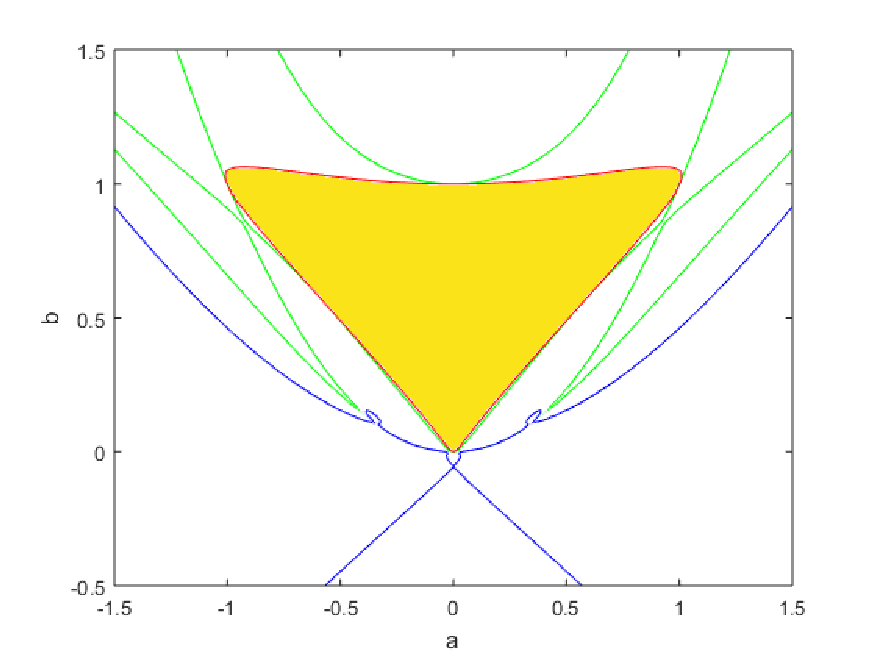}
  \caption{Shaded area corresponds with $\mathcal{W}^{(3)}$}
  \label{fig:rango3caso2}
\end{figure}

Next we consider the curve in $\sigma(t)=(a(t),b(t))=\left(1-t \ , \ 1-\frac{2}{3} \  t\right)$ for $t\in (0,1)$. Hence, we have a curve $p_t = \gamma\circ\sigma (t)$ in $\mathcal{B}_5$. By direct computations we obtain  that $p_0 (x,y)= \gamma (1,1)= x (x^2 -y^2 ) (x+1)^2$ has real rank strictly bigger that $3$, but $p_t (x,y) $ has real rank $3$ for all $t\not=0$. In fact, applying our algorithm to the family:
\begin{equation}
    p_t (x,y)=\gamma\circ\sigma (t)= x^5+(2-2t)yx^4-\frac{2 t }{3} y^2x^3+(-2+2t)y^3x^2+\left(-1+\frac{2 t }{3}\right)y^4x
\end{equation}
we get their   Waring  decompositions of length 3:

\begin{equation}
    p_t (x,y)=\frac{5t^2-9}{120t}(x+y)^5+\lambda_1\left(\frac{t-3+2\sqrt{-5t^2+6t}}{3(t-1)}x+y \right)^5+\lambda_2\left(\frac{t-3-2\sqrt{-5t^2+6t}}{3(t-1)}x+y\right)^5
\end{equation}

with 
$$
\begin{array}{lc}
  \lambda_1=-\displaystyle \frac{25t^3-30t^2+19t^2\sqrt{-5t^2+6t}-45t-60t\sqrt{-5t^2+6t}+45\sqrt{-5t^2+6t}+54}{240\, t(5t-6)}  & \text{and}  \\
   \lambda_2=- \displaystyle \frac{25t^3-30t^2-19t^2\sqrt{-5t^2+6t}-45t+60t\sqrt{-5t^2+6t}-45\sqrt{-5t^2+6t}+54}{240\, t(5t-6)}.
\end{array}
$$
We would like to point out that these decompositions do not converge to a decomposition of length 3 of the limit point $p_0$ when $t$ goes to $0$.

\newpage

Experimental computations allow us to determine some areas with rank $4$. For instance, we  have
$$
\mathcal{W}^{(4)}\supset \left\{ \ 
\left[0:\frac{-b}{5}:\frac{-a}{5}:\frac{b-1}{10}:\frac{2a}{5}:1 \right] \in \mathbb{P}^5
\ |\  F (a,b) >0  \ \right\}
$$
with $F(a,b)=-1728 a^4-1184 a^2 b^2-108 b^4+3008 a^2 b+1296 b^3-416 a^2-1224 b^2+400 b-44$.
In the Figure [\ref{fig:rango4caso2}] this set corresponds with the lined area. 

\begin{figure}[ht]
  \centering
    \includegraphics[width=0.6 \textwidth]{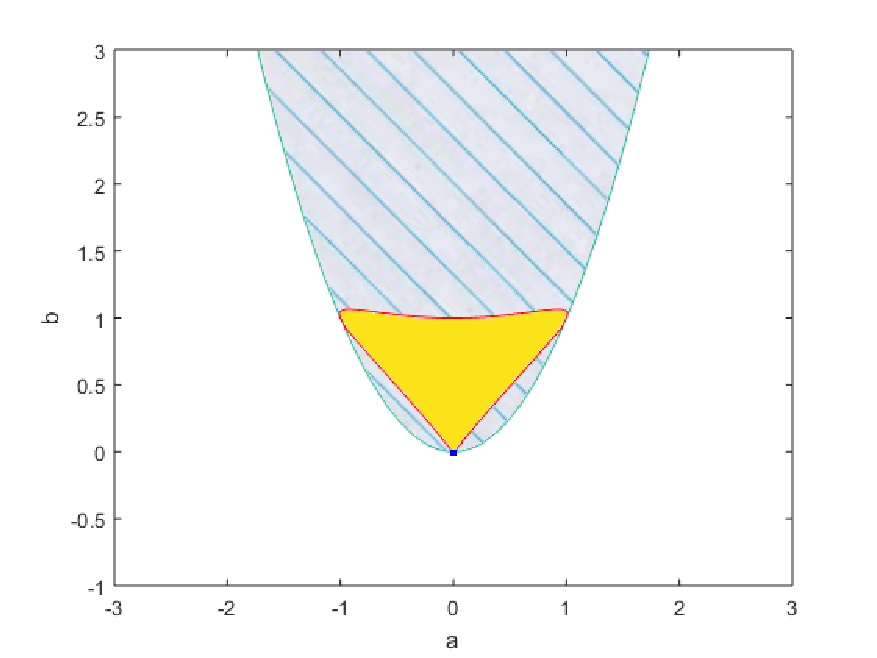}
  \caption{Lined area corresponds with $\mathcal{W}^{(4)}$}
  \label{fig:rango4caso2}
\end{figure}

\section{Appendix: An elementary proof of the Sylvester theorem}

Next, let be $\mathbf{K}$ a field of zero characteristic. We consider  the $\mathbf{K}$ vector space $\mathcal{B}_d$ for binary forms in the variables $x, y$ with coefficients in $\mathbf{K}$. For  $ q \in \mathcal{B}_d$, we denote by $q(D)$ the differential operator obtained from $q$ replacing $x$ by $\dfrac{\partial}{\partial x}$ and $y$ by $\dfrac{\partial}{\partial x}$.

\begin{lema}\label{base}
Let be $(a_1 X + b_1 Y), \ldots, (a_{d+1} X + b_{d+1} Y)$ non proportional linear forms. Then  $\{(a_1 X + b_1 Y)^d, \ldots,$ $(a_{d+1} X + b_{d+1} Y)^d \}$ is a basis of the $\mathbf{K}$ vector space $\mathcal{B}_d$.
\end{lema}  

\begin{proof} First assume  $a_i \ne 0$, for $ i =1, \ldots, d+1$. Then, it is enough to consider the case  $a_i = 1$, for $ i =1, \ldots, d+1$. Moreover, the determinant of the vectors  $\{(a_1 X + b_1 Y)^d, \ldots, (a_{d+1} X + b_{d+1} Y)^d \} \subset \mathcal{B}_d$ in the basis $\{Y^d, XY^{d-1}, \ldots, X^{d-1} Y, X^d\}$ is

\begin{equation}\label{eq:det}
\left| \begin{array}{llll}
 \binom{d}{0} b_1^d & \binom{d}{1} b_1^{d-1} & \ldots & \binom{d}{d} b_1^{d-d} \\ \ldots & \ldots & \ldots & \ldots \\
\binom{d}{0} b_i^d & \binom{d}{1} b_i^{d-1} & \ldots & \binom{d}{d} b_i^{d-d} \\ \ldots & \ldots & \ldots & \ldots \\
\binom{d}{0} b_{d+1}^d & \binom{d}{1} b_{d+1}^{d-1} & \ldots & \binom{d}{d} b_{d+1}^{d-d} \\
\end{array} \right| =
\prod_{k=0}^d \binom{d}{k}
\left| \begin{array}{llll}
b_1^d & b_1^{d-1} & \ldots & b_1^{d-d} \\ \ldots & \ldots & \ldots & \ldots \\
b_i^d & b_i^{d-1} & \ldots & b_i^{d-d} \\ \ldots & \ldots & \ldots & \ldots \\
b_{d+1}^d & b_{d+1}^{d-1} & \ldots & b_{d+1}^{d-d} \\
\end{array} \right| ,
\end{equation}
hence, it is not zero, since  $b_i$ are distinct elements in 
$\mathbf{K}$. 

On the other hand, if $a_i=0$ for some $i$, we can assume $i=1$,  $a_1=0, b_1=1$. Hence we obtain a determinant  similar to \eqref{eq:det}, but now the first row is  $(1, \ldots,0)$, and then, it is also $\not= 0$.
\end{proof}

\medskip

Let be $\displaystyle q = \sum_{k=0}^r b_k x^k y^{r-k} = \prod_{j=1}^{r} (\alpha_j x + \beta_j y) \in \mathcal{B}_d$, with $\alpha_j x + \beta_j y$ non  proportionals linear forms, and   $r \leq d+1$.

\medskip

\begin{lema}   \label{derivacion}
Let consider  $\displaystyle q =  \sum_{k=0}^r b_k x^k y^{r-k}  \in\mathcal{B}_d$ and  $\displaystyle p = \sum_{i=0}^d \binom{d}{i} a_i x^i y^{d-i}  \in \mathcal{B}_d$. Assume $r \leq d$. We have the following equivalent equations:

\begin{enumerate}
\item  $q(D) p = 0$.
\item  $\begin{pmatrix} a_0 & a_1 & \ldots & a_r  \cr  a_1 & a_2 & \ldots & a_{r+1}  \cr  \ldots & \ldots & \ddots & \ldots \cr   a_{d-r} & a_{d-r+1} & \ldots & a_d \end{pmatrix}
\begin{pmatrix} b_0 \cr b_1 \cr \vdots \cr b_r \end{pmatrix}   =
\begin{pmatrix} 0 \cr 0 \cr \vdots \cr 0 \end{pmatrix}$
\item  $\begin{pmatrix} b_0 & b_1 & \ldots & b_r & 0 & \ldots & 0 \cr  0 & b_0 & b_1 & \ldots & b_r & \ldots & 0 \cr  \ldots & \ldots & \ddots & \ddots & \ddots & \ddots & \vdots \cr  0 & \ldots & \ldots & b_0 & b_1 & \ldots  & b_r \end{pmatrix}
\begin{pmatrix} a_0 \cr a_1 \cr \vdots \cr a_d \end{pmatrix}   =
\begin{pmatrix} 0 \cr 0 \cr \vdots \cr 0 \end{pmatrix}$
\end{enumerate}
\end{lema}

\begin{proof} The statement follows from the next equalities:
\begin{align}
 q(D) p = &\left( \sum_{k=0}^r b_k \partial_x^{k} \partial_y^{r-k}  \right)   \left(  \sum_{i=0}^d \binom{d}{i} a_i x^i y^{d-i}   \right) =
 \\
  & \sum_{\tiny \begin{array}{c} k,i \\ 0 \leq i-k \leq d-r \end{array}  } \binom{d}{i} \, a_i \, b_k  \,  x^{i-k}  \, y^{d-i-r+k} \,  \frac{i!}{(i-k)!}  \,  \frac{(d-i)!}{(d-i-r+k)!}  \quad \stackrel{\tiny (m:=i-k) }{=}
  \\
  &\sum_{m=0}^{d-r}  \, \frac{d!}{ m! (d-r-m)! } \, x^m \, y^{d-r-m} \left(  \sum_{\tiny \begin{array}{c} k = 0 \\ i = k+m \end{array}  }^r a_i \, b_k   \right)  
\end{align}
Hence,    $ q(D) p = 0$ can be rewritten as the $d-r+1$ equations:

$$
a_m b_0 + a_{m+1} b_1 + \ldots + a_{m+r} b_r = 0; \; m = 0, 1, \ldots, d-r,
$$
and the lemma follows from these equations.

\end{proof}

Next, we consider the following $\mathbf{K}$ linear subspaces of $\mathcal{B}_d$:

\begin{equation}\label{subespacios}
A := \{ p \in\mathcal{B}_d \, : \, q(D) p = 0 \}, \quad 
B := \left\{ p \in\mathcal{B}_d \, : \, p = \sum_{k=1}^r \lambda_k (\beta_k x - \alpha_k y)^d \right\}    
\end{equation} 

\begin{obs}
Observe that $\dim A =r$ even when $q$ has multiple roots. Nevertheless, if this is the case, say there are  $m < r$ linearly independent linear forms, then  $\dim B = m < r$ and $B \varsubsetneq A$.
\end{obs}

\begin{lema}\label{igualdad} The linear subspaces $A$ and $B$ of $\mathcal{B}_d $ from \eqref{subespacios} are equal.
\end{lema}

\begin{proof}
It is easy to proof that $B \subseteq A$, since

$$\left( \alpha_k \frac{\partial}{\partial x} + \beta_k \frac{\partial}{\partial y} \right) (\beta_k x - \alpha_ky)^d = 0.
$$

But $B$ is generated by $\{ (\beta_k x - \alpha_k y)^d  \}_{k=1}^r$, hence  $\dim B = r$ by  \ref{base}. Moreover, by \ref{derivacion} the subspace $A$ is defined by $d-r+1$ linearly independent equations; and then $\dim A = (d+1) - (d-r+1) = r$. But $B \subseteq A$, so they equal.
\end{proof}

From lemma \ref{igualdad} we obtain the following Sylvester's theorem (The Fundamental Apolarity theorem, see \cite{Re2}). 

\begin{theorem}\label{Sylvester}
Let be $\displaystyle q = \sum_{k=0}^r b_k x^k y^{r-k} = \prod_{j=1}^{r} (\alpha_j x + \beta_j y)$ in $\mathcal{B}_d$, with some non proportional linear forms $\alpha_j x + \beta_j y$ and  $r \leq d$. Let consider $\displaystyle p = \sum_{i=0}^d \binom{d}{i} a_i x^i y^{d-i}  \in \mathcal{B}_d$.We have the following equivalent statements:
\begin{enumerate}
    \item We have the equality $q(D) p = 0$.
    \item We can rewite the binary form $p$ as:
    \begin{equation}\label{eq:waring}
     p(x) = \sum_{j=1}^r \lambda_j (\beta_j x - \alpha_j y)^d   
    \end{equation}
    for some $\lambda_j \in \mathbf{K}$.
\end{enumerate}
\end{theorem}  

\begin{obs}
This theorem \ref{Sylvester} and  the lemma \ref{derivacion} give the Sylvester's algorithm: For a binary form $p$ we try to find a binary form  without multiple roots in $\mathbf{K}$, say  $q$, of degree $ r = 1, \ldots, d$ such that $q(D) p = 0$. Then, by \ref{Sylvester}, we can obtain a Waring decomposition as in \ref{eq:waring} of minimal length.
\end{obs}

\medskip

\noindent{\bf Acknowledgements}

We wish thank for Prof. L. Gonz\'alez Vega for enlightening discussions about the decomposition of general forms of fifth degree.

\medskip


\end{document}